\DeclareMathAlphabet{\mathpzc}{OT1}{pzc}{m}{it} 
\numberwithin{equation}{section}       
\numberwithin{figure}{section}       
\theoremstyle{plain}
\newtheorem{theo}{Theorem}
\newtheorem{prop}{Proposition}[section]
\newtheorem{lemm}[prop]{Lemma}
\theoremstyle{definition}
\theoremstyle{remark}
\newtheorem{rema}[prop]{Remark}
\newtheoremstyle{citing}
  {3pt}
  {3pt}
  {\itshape}
  {}
  {\bfseries}
  {.}
  {.5em}
  {\thmnote{#3}}
\theoremstyle{citing}
\newtheorem*{generic}{}
\newcommand{\partn}[1]{{\smallskip \noindent \textbf{#1.}}}
\newcommand{\R}{\mathbb{R}}
\newcommand{\Z}{\mathbb{Z}}
\newcommand{\cM}{\mathcal{M}}
\newcommand{\cZ}{\mathcal{Z}}
\newcommand{\sA}{\mathscr{A}}
\newcommand{\sB}{\mathscr{B}}
\newcommand{\hC}{\widehat{C}}
\newcommand{\hK}{\widehat{K}}
\newcommand{\hS}{\widehat{S}}
\newcommand{\hvarphi}{\widehat{\varphi}}
\newcommand{\tC}{\widetilde{C}}
\newcommand{\tM}{\widetilde{M}}
\newcommand{\tW}{\widetilde{W}}
\newcommand{\tnu}{\widetilde{\nu}}
\renewcommand{\=}{ : = }
\newcommand{\eps}{\varepsilon}
\newcommand{\ul}{\underline{l}}
\newcommand{\whf}{\widehat{f}}
\DeclareMathOperator{\dist}{dist}
\DeclareMathOperator{\Crit}{Crit} 
\begin{document}

\title[Equivalent characterizations of hyperbolic potentials]{Equivalent characterizations of hyperbolic H\"{o}lder potential for interval maps }

\author{Huaibin Li}
\address{Huaibin Li, Facultad de Matem{\'a}ticas, Pontificia Universidad Cat{\'o}lica de Chile, Avenida Vicu{\~n}a Mackenna~4860, Santiago, Chile}
\thanks{The author  was supported by the National Natural Science Foundation of China (Grant No.
11101124), and  FONDECYT grant 3110060, Chile.}

\email{matlihb@gmail.com}
\subjclass[2010]{37D35, 37E05}
\keywords{Interval maps, hyperbolic H\"{o}lder continuous potentials, positive Lyapunov exponent}

\maketitle
\begin{abstract}
Consider a topologically exact $C^3$ interval map without non-flat critical points. Following the works we did in~\cite{LiRiv12two}, we give two equivalent characterizations of hyperbolic H\"{o}lder continuous potential in terms of the Lyapunov exponents and the measure-theoretic entropies  of equilibrium states for those potentials.
\end{abstract}
\section{Introduction}
The thermodynamic formalism of smooth dynamical systems was initiated  by Sinai, Ruelle, and Bowen~\cite{Bow75,Rue76e}.
For a uniformly hyperbolic diffeomorphism acting on a compact manifold of arbitrary dimension, they gave a complete description for H{\"o}lder continuous potentials. There have been several extensions of these results to one-dimensional maps, that go beyond the uniformly hyperbolic setting.
The lack of uniform hyperbolicity is usually compensated by an extra hypothesis on the potential.  For example, there is a wealth of results for a piecewise monotone interval map~$f : I \to I$ and a potential~$\varphi$ of bounded variation satisfying
$ \sup_I \varphi < P(f, \varphi),$
where~$P(f, \varphi)$ denotes the  pressure, see for example~\cite{BalKel90,DenKelUrb90,HofKel82b,Kel85}
and references therein, as well as Baladi's book~\cite[\S$3$]{Bal00b}.
Most results apply under the following weaker condition:
\begin{center}
\emph{For some integer~$n \ge 1$, the function~$S_n(\varphi) \= \sum_{j = 0}^{n - 1} \varphi \circ f^j$ satisfies \\
$\sup_{I} \frac{1}{n} S_n(\varphi) < P(f, \varphi).$}
\end{center}
In what follows, a potential~$\varphi$ satisfying this condition is said to be \emph{hyperbolic for~$f$}.

In this paper, our goal is to give  equivalent characterizations of hyperbolic H\"{o}lder continuous potentials for interval maps. In~\cite{InoRiv12} the authors gave  characterizations of hyperbolic H\"{o}lder continuous potentials
for rational maps. Here we want to extend this result to interval maps in order to refer in future.
In order to state our main result, we briefly recall some concepts from
thermodynamic formalism, see for example\cite{PrzUrb10}
for background.

Let~$(X, \dist)$ be a compact metric space and $T:X\to X$ a continuous map.
Denote by~$\cM(X)$ the space of Borel probability measures on~$X$ endowed with the weak* topology, and by~$\cM(X, T)$ the subspace of~$\cM(X)$ of those measures that are invariant by~$T$.
For each measure~$\mu$ in~$\cM(X, T)$, denote by~$h_{\mu}(T)$ the \emph{measure-theoretic entropy} of~$\mu.$
For a continuous function $\varphi: X \to \R$, denote by~$P(T,\varphi)$ the \emph{topological pressure of $T$ for the potential $\varphi$}, defined by
\begin{equation}
\label{e:variational principle}
P(T, \varphi)
\=
\sup\left\{h_\mu(T) + \int_X \varphi \ d\mu: \mu\in \cM(X, T)\right\}.
\end{equation}
A measure~$\mu$ in~$\cM(X, T)$ is called an \emph{equilibrium state of $T$ for the potential~$\varphi$}, if the supremum in~\eqref{e:variational principle} is attained at~$\mu$.

Given a compact interval~$I$ of $\R$, and a differentiable map~$f : I \to I$, a point of~$I$ is \emph{critical} if the derivative of~$f$ vanishes at it. Denote by~$\Crit(f)$ the set of critical points of~$f$.
A differentiable interval map~$f : I \to I$ is \emph{of class~$C^3$ with non-flat critical points}, if it has a finite number of critical points and if:
\begin{itemize}
\item
The map~$f$ is of class~$C^3$ outside $\Crit(f)$;
\item
For each critical point~$c$ of~$f$ there exists a number $\ell_c>1$ and diffeomorphisms~$\phi$ and~$\psi$ of~$\R$ of class~$C^3$, such that $\phi(c)=\psi(f(c))=0$, and such that on a neighborhood of~$c$ on~$I$, we have
$$ |\psi\circ f| = |\phi|^{\ell_c}. $$
\end{itemize}

Throughout the rest of this paper, fix a compact interval $I$ of $\R$ and let~$\sA$ denote  the collection of interval maps $f:I\to I$ of class~$C^3$ with non-flat critical points. For an interval map $f$ of $\sA$, denote by~$|\cdot|$ the distance on~$I$ induced by the norm distance on~$\R$. Besides, for a subset~$W$ of~$I$ we use~$|W|$ to denote the diameter of~$W$ with respect to~$|\cdot|$. For each measure~$\mu$ in~$\cM(I, f)$,  denote the Lyapunov exponent of $\mu$ by $$\chi_{\mu}(f)\=\int_{X} \ln |f'| d\mu.$$

In what follows, we say that $f$ is \emph{topologically exact} if for every open subset $U\subset I$ there is $n\geq 1$ such that $f^n(U)=I.$
The main result of this paper is following:
\begin{theo}
\label{t:real}
Let $f: I \to I$ be an interval map in $\sA$. If $f$ is  topologically exact.
Then for every H\"{o}lder continuous potential $\varphi: I\to \R,$ the following properties are equivalent:
\begin{enumerate}[1.]
\item The potential $\varphi$ is hyperbolic for $f$;
\item The measure-theoretic entropy of each equilibrium state of $f$ for the potential $\varphi$ is strictly positive.
\item The Lyapunov exponent of each equilibrium state of $f$ for the potential $\varphi$ is strictly positive.
\end{enumerate}
\end{theo}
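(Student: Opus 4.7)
The strategy is to prove the cycle $(1) \Rightarrow (2) \Rightarrow (3) \Rightarrow (1)$. The first two implications are direct consequences of the variational principle and the Ruelle inequality; the third is the substantive content, which I would carry out by adapting the rational-map argument of~\cite{InoRiv12} to the interval setting using the inducing-scheme machinery of~\cite{LiRiv12two}.

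For $(1) \Rightarrow (2)$, pick $n \geq 1$ witnessing the pressure gap $\sup_I \frac{1}{n} S_n(\varphi) < P(f,\varphi)$. For any equilibrium state $\mu \in \cM(I,f)$, $f$-invariance gives $\int \varphi\, d\mu = \frac{1}{n}\int S_n(\varphi)\, d\mu \leq \sup_I \frac{1}{n} S_n(\varphi)$, so the equilibrium equation yields
\[
h_\mu(f) = P(f,\varphi) - \int \varphi\, d\mu \;\geq\; P(f,\varphi) - \sup_I \tfrac{1}{n} S_n(\varphi) \;>\; 0.
\]
For $(2) \Rightarrow (3)$, invoke the Ruelle inequality $h_\mu(f) \leq \chi_\mu^+(f) := \max\{0, \chi_\mu(f)\}$, valid for maps in $\sA$: since $h_\mu(f) > 0$, this forces $\chi_\mu(f) > 0$.

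For $(3) \Rightarrow (1)$, I argue by contrapositive. Assuming $\sup_I \frac{1}{n} S_n(\varphi) \geq P(f,\varphi)$ for every $n \geq 1$, pick $x_n \in I$ with $\frac{1}{n} S_n(\varphi)(x_n) \to P(f,\varphi)$, and let $\mu$ be a weak-$*$ subsequential limit of the empirical measures $\mu_n := \frac{1}{n}\sum_{k=0}^{n-1}\delta_{f^k(x_n)}$. Then $\mu \in \cM(I,f)$, continuity of $\varphi$ gives $\int \varphi\, d\mu = P(f,\varphi)$, and the variational principle forces $h_\mu(f) = 0$, so $\mu$ is an equilibrium state. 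To contradict (3) it suffices to show $\chi_\mu(f) \leq 0$. If instead $\chi_\mu(f) > 0$, then a Pliss-type hyperbolic-times construction on a Pesin block of positive $\mu$-measure, coupled with the inducing scheme for topologically exact $C^3$ interval maps with non-flat critical points from~\cite{LiRiv12two}, produces an $f$-invariant measure $\widetilde\mu$ with $h_{\widetilde\mu}(f) > 0$ and $\int \varphi\, d\widetilde\mu$ arbitrarily close to $\int \varphi\, d\mu = P(f,\varphi)$; then $h_{\widetilde\mu}(f) + \int \varphi\, d\widetilde\mu > P(f,\varphi)$, violating the variational principle.

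\textbf{Main obstacle.} The principal difficulty is establishing $\chi_\mu(f) \leq 0$ in the last step. Since $\log|f'|$ is only upper semicontinuous on $I$ (taking value $-\infty$ at critical points), weak-$*$ convergence of $\mu_n \to \mu$ gives only $\chi_\mu(f) \geq \limsup_n \frac{1}{n}\log|(f^n)'(x_n)|$, which is the wrong inequality. The real work lies in converting a hypothetical positive Lyapunov exponent of $\mu$ into strictly positive entropy of a companion measure via a Pesin-theoretic construction; this is precisely where topological exactness and the non-flat critical point hypothesis enter through the induced Markov structures of~\cite{LiRiv12two}.
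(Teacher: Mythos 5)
Your implications $1 \Rightarrow 2$ and $2 \Rightarrow 3$ are exactly those in the paper (pressure gap plus $f$-invariance, then Ruelle's inequality). For $3 \Rightarrow 1$ you argue by contrapositive; the paper argues directly, but the substantive content is the same in both routes: one must show that for every ergodic invariant measure $\nu$ with $\chi_\nu(f) > 0$ one has the \emph{strict} inequality $P(f,\varphi) > \int_I \varphi \, d\nu$. In the paper this is the Key Lemma: at $\nu$-a.e.\ $x_0$ the tree pressure $\limsup_n \frac{1}{n}\log \sum_{y \in f^{-n}(x_0)} \exp(S_n\varphi(y))$ strictly exceeds $\int \varphi \, d\nu$, which, combined with the upper bound $P(f,\varphi) \geq$ tree pressure (Lemma~\ref{l:tree pressure}) and the inequality~\eqref{e:mean average versus invariant average}, finishes the proof. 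The Key Lemma is proved by constructing a \emph{free} Iterated Multivalued Function System from the natural extension together with Dobbs's version of Ledrappier's unstable-manifold theorem, rather than via Pliss hyperbolic times; the freeness of the IMFS is precisely what converts positive Lyapunov exponent into an exponentially growing family of disjoint backward branches with controlled Birkhoff sums and hence into the strict pressure excess.

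The concrete gap in your sketch is in the final step. Producing an invariant $\widetilde\mu$ with $h_{\widetilde\mu}(f) > 0$ and $\int \varphi \, d\widetilde\mu$ \emph{arbitrarily close} to $\int \varphi \, d\mu = P(f,\varphi)$ does not by itself violate the variational principle: you would need $h_{\widetilde\mu}(f) > P(f,\varphi) - \int \varphi \, d\widetilde\mu$, and a priori both sides can shrink together, so ``arbitrarily close'' with ``positive entropy'' is not a contradiction. What is actually required is either a lower bound on $h_{\widetilde\mu}(f)$ that does not degenerate as the $\varphi$-integrals approach each other, or — as the paper does — a direct strict inequality $P(f,\varphi) > \int \varphi \, d\nu$ for every positive-Lyapunov ergodic $\nu$. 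This quantitative control is exactly what the free-IMFS/tree-pressure estimate supplies, and it is the step your sketch currently asserts rather than establishes. (A smaller point: from $\frac{1}{n}S_n(\varphi)(x_n) \geq P(f,\varphi)$ you only get $\int \varphi \, d\mu \geq P(f,\varphi)$ along a subsequence; the equality $\int \varphi \, d\mu = P(f,\varphi)$ and $h_\mu(f)=0$ then follow from the variational principle, so $\mu$ is indeed an equilibrium state, but this deserves a line.)
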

\begin{rema}
\label{r:TCE}
The equivalence of properties $1$ and $2$ of Theorem~\ref{t:real} is part of Proposition 3.1 of~\cite{InoRiv12}, although we give the proof for the reader's convenience.  On the other hand, recall that a map~$f$ in~$\sA$ satisfies the \emph{Topological Collet-Eckmann Condition}, if there is a constant~$\chi > 0$ such that for every~$\nu$ in~$\cM(I, f)$ we have~$\int_I \ln |f'| \ d\nu \ge \chi$, see~\cite{Riv1204} for other equivalent formulations.
 Let $f$ be a topologically exact map in~$\sA$ that satisfies the Topological Collet-Eckmann condition, then every H\"{o}lder continuous potential is hyperbolic for $f,$ see also~\cite{LiRiv12two} for another proof.
\end{rema}

\subsection{Acknowledgments}
\label{ss:Acknowledgements}
The author would like to thank  Juan Rivera-Letelier for his useful discussions and helps.

\section{A reduction}
\label{s:a reduction}
In this section, we state first our main technical result as the ``Key Lemma'', whose proof occupies \S\S\ref{s:constructing a free IMFS}, \ref{s:proof of Key Lemma}, and then we derive our Theorem~\ref{t:real} from it.
In what follows, for each function $\varphi : I\to \R$ and each integer~$n \ge 1$, put
$$ S_n(\varphi)
\=
\varphi + \varphi \circ f + \cdots + \varphi \circ f^{n-1}. $$

\begin{generic}[Key Lemma]
Let~$f$ be a map in $\sA$ that is topologically exact and let $\nu$ be an invariant ergodic probability measure whose Lyapunov exponent is strictly positive.
Then for every H\"{o}lder continuous function $\varphi:I\to \R$,
there is a set of full measure of points~$x_0$ such that
$$ \limsup_{n\rightarrow \infty}\frac{1}{n}\log \sum_{y\in f^{-n}(x_0)}\exp \left( S_n(\varphi)(y) \right)
>
\int_I \varphi \ d\nu. $$
\end{generic}

The following lemma is useful for the proof Theorem~\ref{t:real}.
\begin{lemm}[Lemma~2.8, \cite{LiRiv12two}]
\label{l:tree pressure}
Let~$f$ be a map in~$\sA$ that is topologically exact, and let~$\varphi : I \to \R$ be a continuous function.
Then for every point~$x_0$ of~$I,$ we have
$$ P(f, \varphi)
\ge
\limsup_{n\rightarrow \infty}\frac{1}{n}\log \sum_{y\in f^{-n}(x_0)} \exp (S_n\varphi(y)). $$
\end{lemm}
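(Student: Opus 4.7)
The plan is to carry out the classical Bowen--Ruelle construction: build an $f$-invariant probability measure out of the weighted preimage sums and then apply the variational principle. For each $n \ge 1$ I set
$$ Z_n \= \sum_{y \in f^{-n}(x_0)} \exp(S_n\varphi(y)), \qquad \mu_n \= \frac{1}{Z_n}\sum_{y \in f^{-n}(x_0)} \exp(S_n\varphi(y))\,\delta_y, $$
and Ces\`aro--average: $\bar\mu_n \= \frac{1}{n}\sum_{k=0}^{n-1} (f^k)_* \mu_n$. Since $(f^n)_* \mu_n = \delta_{x_0}$, one has $f_* \bar\mu_n - \bar\mu_n = \frac{1}{n}(\delta_{x_0} - \mu_n) \to 0$ in total variation, so any weak-$*$ limit $\mu \in \cM(I,f)$ along a subsequence is $f$-invariant.

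Next I would pick the partition $\xi$ of $I$ whose atoms are the maximal open intervals of monotonicity of $f$; its boundary is $\Crit(f)\cup\partial I$. Topological exactness forbids any critical point from being periodic, since such a point would be super-attracting and its basin of attraction would be a nonempty open invariant subset never surjecting onto $I$. Hence for every critical~$c$ the forward orbit $\{f^k(c)\}_{k\ge 0}$ is infinite, and by invariance no $f$-invariant probability measure can give positive mass to~$c$; in particular $\mu(\partial \xi)=0$. On each atom $P$ of $\xi^n \= \bigvee_{k=0}^{n-1} f^{-k}\xi$ the map $f^n$ is monotone, so $|P \cap f^{-n}(x_0)| \le 1$. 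Thus $\xi^n$ separates the atoms of $\mu_n$, and a direct computation gives
$$ \log Z_n \;=\; H_{\mu_n}(\xi^n) + \int S_n\varphi \, d\mu_n \;=\; H_{\mu_n}(\xi^n) + n\int \varphi \, d\bar\mu_n. $$

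To transfer this identity to $\mu$, I would invoke the standard Misiurewicz averaging trick: decomposing $\xi^n$ into shifted copies of $\xi^q$ and using concavity of the entropy functional gives, for every fixed $q\ge 1$,
$$ \frac{1}{n} H_{\mu_n}(\xi^n) \;\le\; \frac{1}{q} H_{\bar\mu_n}(\xi^q) + O(q/n)\log|\xi|. $$
Taking $\limsup_{n\to\infty}$ along the convergent subsequence, and noting that $\mu(\partial \xi^q) \le q\,\mu(\partial\xi)=0$ so $H_{\bar\mu_n}(\xi^q) \to H_\mu(\xi^q)$, I obtain
$$ \limsup_{n\to\infty}\frac{1}{n}\log Z_n \;\le\; \frac{1}{q} H_\mu(\xi^q)+\int \varphi \, d\mu. $$
Letting $q\to\infty$ and applying the variational principle yields the claim:
$$ \limsup_{n\to\infty}\frac{1}{n}\log Z_n \;\le\; h_\mu(f,\xi)+\int \varphi \, d\mu \;\le\; P(f,\varphi). $$
The only real obstacle is arranging $\mu(\partial\xi)=0$, which is exactly where the topological exactness hypothesis is essential; the rest is a direct translation of Misiurewicz's proof of the variational principle into the weighted-preimage setting.
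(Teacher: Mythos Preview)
The paper does not prove this lemma; it is quoted from \cite{LiRiv12two} and used as a black box, so there is no in-paper argument to compare against.

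Your approach---constructing an invariant measure from weighted preimage sums and running the Misiurewicz averaging trick---is the standard one and is essentially correct. One point deserves tightening: you write $\partial\xi=\Crit(f)\cup\partial I$ but then justify $\mu(\partial\xi)=0$ only for the critical points, leaving the endpoints of $I$ unaddressed (and an endpoint \emph{can} be periodic for a topologically exact map, e.g.\ $x=0$ for $4x(1-x)$ on $[0,1]$, so this is not vacuous). Fortunately it is not a genuine gap: in the subspace topology of $I=[a,b]$ the endpoints are not boundary points of the extreme atoms---for instance $[a,c_1)=I\cap(-\infty,c_1)$ is open in $I$ with boundary $\{c_1\}$---so in fact $\partial\xi=\Crit(f)$. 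Since the Portmanteau-type continuity of $\nu\mapsto H_\nu(\xi^q)$ at $\mu$ only needs the boundary taken in $I$ to be $\mu$-null, your argument that critical points are non-periodic under topological exactness (hence carry no invariant mass, by Poincar\'e recurrence) is exactly what is required, and the proof goes through.
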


\begin{proof}[Proof of Theorem~\ref{t:real} assuming the Key Lemma]
First, to prove the implication
$1\Rightarrow 2$, assume that~$\varphi: I\to \R$ is a hyperbolic potential for $f$ and let $\nu$ be an
equilibrium state of $f$ for the potential $\varphi$. Then there is an integer~$n\geq 1$ such that $\sup_{I}\frac{1}{n}S_n(\varphi)<P(f,\varphi)$ and $$P(f,\varphi)=h_\nu(f)+\int_{I} \varphi \ d\nu.$$ Since
$\nu$ is invariant measure for $f$, we have
$$\int_{I}S_n(\varphi) \ d \nu =\sum_{i=0}^{n-1}\int_{I} \varphi\circ f^i\ d\nu =n\int_{I}\varphi\ d\nu.$$  It follows that
\begin{align*}
h_\nu(f)&=P(f,\varphi)-\int_{I}\varphi \ d\nu=P(f,\varphi)-\frac{1}{n}\int_{I}S_n(\varphi)\ d \nu\\&\geq P(f,\varphi)-\sup_{I}\frac{1}{n}S_n(\varphi)>0.
\end{align*}
The implication
$2 \Rightarrow 3$ is a direct consequence of the following Ruelle's inequality: $\max 2\{\chi_\nu(f), 0\}\geq h_\nu(f)>0,$ see for example~\cite{PrzUrb10, Rue78}.

It remains to prove the implication
$3 \Rightarrow 1$. First, as the proof of~\cite[Main Theorem]{LiRiv12two} without changes, we have
\begin{equation}
\label{e:mean average versus invariant average}
\limsup_{n\rightarrow \infty} \left( \sup_{I}\frac{1}{n}S_n(\varphi) \right)
\le
\sup_{\nu\in \cM(I, f)} \int_I \varphi \ d\nu.
\end{equation}
To prove that~$\varphi$ is hyperbolic for~$f$, let~$\nu_0$ be an invariant probability measure maximizing the function~$\nu \mapsto \int_I \varphi \ d \nu$.
Then for almost every ergodic component~$\nu_0'$ of~$\nu_0$, we have~$\int_I \varphi \ d \nu_0' = \int_I \varphi \ d \nu_0$.
Thus, the Key Lemma applied to such a~$\nu_0'$, together with Lemma~\ref{l:tree pressure} implies
$$ P(f, \varphi)
>
\int_I \varphi \ d\nu_0'
=
\int_I \varphi \ d\nu_0
=
\sup_{\nu \in \cM(I, f)} \int_I \varphi \ d\nu. $$
Together with~\eqref{e:mean average versus invariant average}, this implies that~$\varphi$ is hyperbolic for~$f$ and completes the proof of  Theorem~\ref{t:real}.
\end{proof}

\section{Proof of the Key Lemma}

In this section, we construct first an ``Iterated Multivalued Function System", and then use it to prove our Key Lemma.

\subsection{Iterated Multivalued Function Systems}\label{s:constructing a free IMFS}
This subsection is devoted to the construction of an ``Iterated Multivalued Function System", which is the main ingredient in the proof of the Key Lemma.
It is stated as Proposition~\ref{p:constructing a free IMFS}, below.

Let~$f$ be a map in~$\sA$.
Given a compact and connected subset~$B_0$ of~$I$, a sequence multivalued functions~$(\phi_l)_{l = 1}^{+\infty}$ is an \emph{Iterated Multivalued Function System (IMFS) generated by~$f$}, if for every~$l$ there is an integer $m_l \ge 1$, and a pull-back~$W_l$ of~$B_0$ by~$f^{m_l}$ contained in~$B_0$, such that
$$ f^{m_l}(W_l) = B_0\,\,\,
\text{ and }\,\,\,
\phi_l = (f^{m_l}|_{W_l})^{-1}. $$
In this case, $(m_l)_{l = 1}^{+\infty}$ is the \emph{time sequence of $(\phi_l)_{l = 1}^{+\infty}$}, and \emph{$(\phi_l)_{l = 1}^{+\infty}$ is defined on~$B_0$}.
Note that for each subset~$A$ of~$B_0$ and each~$l$, the set~$\phi_l(A) \= f^{-m_l}(A)\cap W_l$ is non-empty.

Let~$(\phi_l)_{l = 1}^{+\infty}$ be an IMFS generated by~$f$ with time sequence~$(m_l)_{l = 1}^{+\infty}$, defined on a set~$B_0$.
For each integer~$n \ge 1$ put $\Sigma_n \= \{1, 2,\cdots \}^n $ and denote the space of all finite words in the alphabet~$\{1, 2, \ldots, \}$ by $\Sigma^* \= \bigcup_{n\ge 1}\Sigma_n$.
For every integer~$k \ge 1$ and~$\ul = l_1 \cdots l_k$ in~$\Sigma^*$, put
$$ | \ul |=k, \,\,
m_{\ul} = m_{l_1}+m_{l_2}+\cdots +m_{l_k}\,\,
\text{ and }\,\,
\phi_{\ul} = \phi_{l_1}\circ\cdots\circ\phi_{l_k}. $$
Note that for every~$x_0$ in~$B_0$, and every pair of distinct words~$\ul$ and~$\ul'$ in~$\Sigma^*$ satisfying~$m_{\ul} = m_{\ul'}$, we have the following property:
\begin{center}
(*) \qquad
If the sets~$\phi_{\ul}(x_0)$ and~$\phi_{\ul'}(x_0)$ intersect, then they coincide.
\end{center}
The IMFS $(\phi_l)_{l = 1}^{+\infty}$ is \emph{free}, if there is~$x_0$ in~$B_0$ such that for every pair of distinct words~$\ul$ and~$\ul'$ in~$\Sigma^*$ such that~$m_{\ul}=m_{\ul'}$, the sets~$\phi_{\ul}(x_0)$ and~$\phi_{\ul'}(x_0)$ are disjoint.

\begin{prop}
\label{p:constructing a free IMFS}
Let~$f$ be an interval map in~$\sA$ that is topologically exact.
Let~$\varphi: I\to \R$ be  H{\"o}lder continuous, $t\geq 0$ and put $\psi_t\=\varphi-t\ln|f'|.$ Let~$\nu$ be an ergodic invariant probability measure that is not supported on a periodic orbit and that has strictly positive Lyapunov exponent.
Then there exists a subset~$X$ of~$I$ of full measure with respect to~$\nu$, such that for every point~$x_0$ in~$X$ the following property holds: There exist~$D$ in $(0, +\infty)$, a compact and connected subset~$B_0$ of~$I$ containing~$x_0$, and a free IMFS~$(\phi_l)_{l=1}^{+\infty}$ generated by~$f$ with time sequence~$(m_l)_{l=1}^{+\infty}$, such that~$(\phi_l)_{l=1}^{+\infty}$ is defined on~$B_0$, and such that for every~$l$ and every~$y$ in~$\phi_l(B_0)$ we have
\begin{equation}\label{e:fre}
S_{m_l}(\psi_t)(y)\ge
m_{l}\int\psi_t \ d \nu - D.
\end{equation}
\end{prop}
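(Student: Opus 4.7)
The plan is to construct the IMFS from inverse branches of iterates of~$f$ associated to Pliss hyperbolic times along a $\nu$-generic orbit of~$x_0$, using the Koebe distortion principle to control geometry and topological exactness to land the resulting branches onto a fixed base interval~$B_0$.

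First, I would let $X \subset I$ be the full $\nu$-measure set of points $x_0$ whose forward orbit avoids $\Crit(f)$ and its preimages, is not eventually periodic (this uses that $\nu$ is not supported on a periodic orbit), is equidistributed for~$\nu$, and satisfies the two Birkhoff limits
$$ \tfrac{1}{n}S_n(\log|f'|)(x_0) \to \chi_{\nu}(f), \qquad \tfrac{1}{n}S_n(\psi_t)(x_0) \to \int_I \psi_t \, d\nu. $$
Fix such $x_0$ and a small closed interval $B_0 \ni x_0$. Since $\chi_{\nu}(f) > 0$, Pliss's lemma applied to the cocycle $\log|f'| \circ f^k(x_0)$ produces a set $\mathcal H = \mathcal H(x_0) \subset \mathbb N$ of positive lower density --- the \emph{hyperbolic times} --- at which $|(f^{n-k})'(f^k(x_0))| \ge e^{\chi_{\nu}(f)(n-k)/2}$ for every $0 \le k < n$. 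For a Koebe scale $\delta_0 > 0$ chosen small enough (depending on $B_0$ and on a definite distance from the orbit to $\Crit(f)$), the Koebe principle for maps in~$\sA$ yields, for each $n \in \mathcal H$, a connected pull-back $V_n \ni x_0$ of the $\delta_0$-neighborhood of $f^n(x_0)$ by the branch of $f^{-n}$ through~$x_0$, with distortion bounded by a uniform constant $K_0$ and with $|V_n|$ decaying exponentially in~$n$.

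Next, I would use topological exactness together with compactness of~$I$ to produce a uniform $N_0 \in \mathbb N$ such that $f^{N_0}$ maps every subinterval of $I$ of length at least $\delta_0 / 2$ onto~$I$. Consequently, for each sufficiently large $n \in \mathcal H$ there exist $k_n \le N_0$ and a connected pull-back $W_n \subset V_n$ with $f^{n + k_n}(W_n) = B_0$; the inclusion $W_n \subset B_0$ then holds automatically. Setting $m_n \= n + k_n$ and $\phi_n \= (f^{m_n}|_{W_n})^{-1}$ gives candidate IMFS branches. For $y \in W_n$, bounded distortion on $V_n$ yields
$$ S_n(\log|f'|)(y) = S_n(\log|f'|)(x_0) + O(1), $$
and the backward contraction at hyperbolic times, $|f^k(y) - f^k(x_0)| \le C e^{-\chi_{\nu}(f)(n - k)/2}$, combined with the H\"older continuity of~$\varphi$, yields $S_n(\varphi)(y) = S_n(\varphi)(x_0) + O(1)$. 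The last $k_n \le N_0$ iterates contribute a uniformly bounded term since $\psi_t$ is bounded below. Altogether,
$$ S_{m_n}(\psi_t)(y) \ge S_n(\psi_t)(x_0) - C_1, $$
for a constant $C_1$ independent of $n$ and~$y$.

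The hard part will be to upgrade Birkhoff's $o(n)$ convergence to the required uniform deficit~$D$: one must extract an infinite subsequence $(n_l) \subset \mathcal H$ along which $S_{n_l}(\psi_t)(x_0) \ge n_l \int \psi_t \, d\nu - D_0$ with $D_0$ independent of~$l$. For this I would invoke Atkinson's recurrence theorem for mean-zero ergodic sums, which guarantees that $\{n : |S_n(\psi_t)(x_0) - n\int \psi_t \, d\nu| \le 1\}$ is infinite for $\nu$-a.e.~$x_0$, and combine it with a simultaneous Pliss-type argument applied to the vector cocycle $(\log|f'| - \chi_{\nu}(f),\, \psi_t - \int \psi_t \, d\nu) \circ f^k(x_0)$ in order to keep the intersection with $\mathcal H$ infinite. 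Choosing $D$ larger than $C_1 + D_0 + N_0 \bigl|\int \psi_t \, d\nu\bigr| + N_0 |\inf_I \psi_t|$ then gives the desired inequality. Finally, freeness is arranged by building the $(\phi_l)$ inductively so that the $W_l$ form a Markov-like family of disjoint pull-back components of~$B_0$, and by further restricting $x_0$ to the intersection of the countable family of full $\nu$-measure sets avoiding coincidences $\phi_{\ul}(x_0) = \phi_{\ul'}(x_0)$ for distinct words with $m_{\ul} = m_{\ul'}$; by property~$(*)$ of the excerpt, this implies freeness of the resulting IMFS.
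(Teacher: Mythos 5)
Your plan takes a genuinely different route from the paper's: you build inverse branches from forward Pliss hyperbolic times along the orbit of $x_0$, while the paper passes to the natural extension $(\cZ,T,\tnu)$ and uses Dobbs's version of Ledrappier's unstable manifold theorem (Lemma~\ref{l:unstable}) to get pull-backs with distortion control along a fixed \emph{backward} orbit. That difference matters, and it is exactly where your argument develops gaps.

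First, the claim that there is ``a definite distance from the orbit to $\Crit(f)$'' is false in general: a $\nu$-generic forward orbit merely avoids $\Crit(f)$, but typically accumulates on it. Making the Koebe/Pliss argument go through in the presence of critical points requires bookkeeping of the recurrence $\dist(f^j(x_0),\Crit(f))$ (binding periods, slow-recurrence/integrability of $-\log\dist(\cdot,\Crit(f))$), which is nontrivial and not addressed. The natural extension approach avoids this entirely because Lemma~\ref{l:unstable} delivers uniform backward contraction and bounded distortion for $\tnu$-a.e.\ backward orbit at \emph{all} times, with the recurrence absorbed into the measurable functions $\alpha$ and $\theta$. Second, and related: the Pliss set $\mathcal H$ has positive lower density, while the Atkinson-type set of times with bounded deficit for $S_n(\psi_t)(x_0)$ is merely infinite; nothing forces the two to intersect infinitely often. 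You flag this as ``the hard part'' and gesture at a vector Pliss cocycle, but no such simultaneous Pliss statement is given or cited, and it is not clear it holds in the form needed. The paper sidesteps the intersection problem because, with the unstable manifold theorem, contraction is available at \emph{every} backward time, so one only needs the one-sided ergodic lemma (Lemma~\ref{lem:ergodic}) for $\psi_t\circ\pi$ to supply the sequence $(n_l)$.

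Third, the freeness argument is incorrect as stated. The IMFS $(\phi_l)$ and $B_0$ are themselves constructed from $x_0$, so ``further restricting $x_0$ to avoid coincidences'' is circular. Even for a fixed collection of branches, the set $\{x_0\in B_0:\phi_{\ul}(x_0)\cap\phi_{\ul'}(x_0)\neq\emptyset\}$ equals $f^{m_{\ul}}(W_{\ul}\cap W_{\ul'})$, which is generally an interval of positive $\nu$-measure, not a null set, so a countable-union argument does not apply. The ``Markov-like disjoint family'' would indeed suffice (disjointness of the $W_l$ propagates by induction on word length, via the first letter), but you do not explain how to arrange it, and it is not automatic: your $W_l$ all accumulate on $x_0$ from inside the shrinking $V_l$, and topological exactness plus Lemma~\ref{l:almost properness} does not by itself control on which side of $x_0$ they land. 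The paper instead obtains freeness from a combinatorial property that is built in by hand: all branches are constructed along a single backward orbit so that $f^{n_{l+1}-n_l}(x_{l+1})=x_l$ (the nesting in part 1 of Lemma~\ref{l:maximizing branches}), and a fixed small pull-back $U_0'$ is chosen so that $W_l''\subset U_0'$ while $f^{n_{l+1}-n_l-M'}(x_{l+1})\notin U_0'$; comparing the images under $f^{m_{\ul}-m_{l_k}}$ then distinguishes any two words and, via property~$(*)$, yields disjointness. This nesting structure has no analogue in your forward Pliss construction, so the freeness step needs a genuinely new idea if you want to proceed along your lines.
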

The idea of the proof of this proposition is similar to the proof of~\cite[Proposition~3.1]{LiRiv12two}. Its proof, depending on several lemmas, is given at the end of this subsection.

We proceed first to recall the natural extension of~$f$.
Let $\Z_{-}$ denote the set of all non-positive integers and endow
$$ \cZ
\=
\left\{ (z_m)_{m\in \Z_{-}} \in I^{\Z_{-}} : \text{ for every } m \in \Z_-, f(z_{m - 1})=z_m \right\} $$
with the product topology.
Define $T: \cZ\rightarrow \cZ$ by
$$ T \left( (\cdots,z_{-2},z_{-1},z_0) \right)
=
(\cdots,z_{-2},z_{-1},z_0, f(z_0)) $$
and $\pi: \cZ\rightarrow I$ by $\pi((z_m)_{m\in \Z_{-}}) = z_0$.
Note that~$T$ is a bijection, $T^{-1}$ is measurable, $\pi$ is continuous and onto, and~$\pi\circ T= f\circ \pi$.
If~$\nu$ is a Borel probability measure~on~$I$ that is invariant and ergodic for~$f$, then there exists a unique Borel probability measure~$\tnu$ on~$\cZ$ that is invariant and ergodic for~$T$, and that satisfies~$\pi_* \tnu=\nu$, see for example~\cite[\S$2.7$]{PrzUrb10}. We call~$(\cZ,T, \tnu)$ the \emph{natural extension of $(I, f, \nu)$}.

The following is a well-known consequence of the pointwise ergodic theorem, see for example~\cite[Lemma~$1.3$]{PrzRivSmi04} for a proof.
\begin{lemm}\label{lem:ergodic}
Let $(\cZ, \sB, \tnu)$ be a probability space, and let $T : \cZ \rightarrow \cZ$ be an ergodic measure preserving transformation.
Then for each function $\phi: \cZ \rightarrow \R$ that is integrable with respect to~$\tnu$, there exists a subset~$Z$ of~$\cZ$ such that~$\tnu(Z) = 1$, and such that for every~$\underline{z}$ in~$Z$ we have
$$ \limsup_{n\rightarrow \infty}\sum_{i=0}^{n-1} \left(\phi(T^i(\underline{z})) - \int_{\cZ} \phi \ d\tnu \right)
\ge
0. $$
\end{lemm}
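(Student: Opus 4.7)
The plan is to argue by contradiction, using Birkhoff's pointwise ergodic theorem together with a Kac-style return-time argument. Set $\psi \= \phi - \int_{\cZ}\phi \ d\tnu$, so $\int \psi \ d\tnu = 0$ and the claim becomes: $\tnu$-a.e.\ point $\underline{z}$ satisfies $\limsup_{n\to\infty} S_n\psi(\underline{z}) \geq 0$, where $S_n\psi \= \sum_{i=0}^{n-1}\psi\circ T^i$. Assume, towards a contradiction, that the set $A \= \{\underline{z} : \limsup_n S_n\psi(\underline{z}) < 0\}$ has positive measure. Writing
$$ A = \bigcup_{\eps>0,\, N\geq 1} A_{\eps,N}, \qquad A_{\eps,N} \= \{\underline{z} : S_n\psi(\underline{z})\leq -\eps \text{ for every } n\geq N\}, $$
I can fix $\eps>0$ and $N\geq 1$ with $\tnu(A_{\eps,N})>0$.

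I now apply Birkhoff's theorem twice. Applied to $\chi_{A_{\eps,N}}$, it produces, for $\tnu$-a.e. $\underline{z}$, an infinite increasing sequence of visit times $n_1(\underline{z})<n_2(\underline{z})<\cdots$ of the forward orbit of $\underline{z}$ to $A_{\eps,N}$, with $n_k/k\to 1/\tnu(A_{\eps,N})$. Applied to $\psi$, it gives $S_n\psi(\underline{z})/n\to 0$ for $\tnu$-a.e. $\underline{z}$. Fix any $\underline{z}$ at which both conclusions hold, and, in order to exploit the defining bound of $A_{\eps,N}$, thin the visit sequence by putting $m_j\= n_{1+(j-1)N}$. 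Since the $n_k$ are strictly increasing integers, $m_{j+1}-m_j\geq N$, and $n_k/k\to 1/\tnu(A_{\eps,N})$ forces $m_J/J\to N/\tnu(A_{\eps,N})$.

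Telescoping and using both $T^{m_j}\underline{z}\in A_{\eps,N}$ and $m_{j+1}-m_j\geq N$,
$$ S_{m_J}\psi(\underline{z}) = S_{m_1}\psi(\underline{z}) + \sum_{j=1}^{J-1} S_{m_{j+1}-m_j}\psi(T^{m_j}\underline{z}) \leq S_{m_1}\psi(\underline{z}) - (J-1)\eps. $$
Dividing by $m_J$ and letting $J\to\infty$ yields $\limsup_J S_{m_J}\psi(\underline{z})/m_J\leq -\eps\,\tnu(A_{\eps,N})/N<0$, contradicting $S_n\psi(\underline{z})/n\to 0$.

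The main delicacy is the choice of the subsequence $(m_j)$: it must be sparse enough that $m_{j+1}-m_j\geq N$, so the inequality defining $A_{\eps,N}$ can be applied at each $T^{m_j}\underline{z}$, yet dense enough that $m_J$ grows linearly in $J$, so the accumulated decrement $-(J-1)\eps$ dominates $m_J$ and the contradiction with Birkhoff materializes. Retaining every $N$-th visit to $A_{\eps,N}$ achieves both at once.
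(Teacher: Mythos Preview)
Your argument is correct. The contradiction is set up cleanly: once you isolate a set $A_{\eps,N}$ of positive measure on which $S_n\psi\le -\eps$ for all $n\ge N$, visiting it along a subsequence with gaps $\ge N$ forces $S_{m_J}\psi/m_J$ to stay bounded away from zero, contradicting Birkhoff for $\psi$. The thinning $m_j=n_{1+(j-1)N}$ is exactly what is needed, and the asymptotics $m_J/J\to N/\tnu(A_{\eps,N})$ follow from $n_k/k\to 1/\tnu(A_{\eps,N})$ as you say. One cosmetic point: in writing $A=\bigcup_{\eps>0,\,N\ge 1}A_{\eps,N}$ you may as well restrict to rational $\eps$ to keep the union countable, though measurability of $A$ is clear anyway since $\limsup_n S_n\psi$ is measurable.

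As for comparison with the paper: the paper does not actually prove this lemma. It records it as a ``well-known consequence of the pointwise ergodic theorem'' and refers the reader to \cite[Lemma~1.3]{PrzRivSmi04}. Your self-contained argument is therefore more than the paper provides, and it is in the same spirit as the standard proof one finds in that reference: a contradiction obtained by combining Birkhoff's theorem for $\psi$ with the recurrence guaranteed by Birkhoff's theorem for the indicator of a ``bad'' set.
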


We need the following lemma which is a
version of Ledrappier's unstable manifold theorem, see~\cite{Dob0801} for the proof.

\begin{lemm}[Theorem~16, \cite{Dob0801}]\label{l:unstable}
Let $f$ be an interval map in $\sA.$ Suppose $\nu$ in $\cM(I, f)$ has strictly positive finite Lyapunov exponent.
Denote by $(\cZ, T, \tnu)$ the natural extension of $(I, f, \nu).$
Then there exists a measurable function $\alpha$ on $\cZ$ such that $0 < \alpha< 1/2$ almost everywhere with respect to~$\tnu$, and such that for $\tnu$-almost every point $y$ in $\cZ$  there exists a set $V_y$ contained in $\cZ$ with the
following properties:
\begin{enumerate}[1.]
\item $y$ is in $V_y$ and $\pi(V_y) = B(\pi(y), \alpha(y)).$
\item  For each integer $n \geq 0$, $f^n: \pi(T^{-n}(V_y)) \to \pi(V_y)$ is diffeomorphic.
\item For each $y'$ in $V_y$,
$$\sum_{i=0}^{+\infty}\left|\log |Df(\pi( T^{-i}(y')))|-\log |Df(\pi (T^{-i}(y')))| \right| < \log 2.$$
\item For each $\eta > 0$ there is a measurable function $\theta$ on $\cZ$ with~$0 < \theta < +\infty$
almost everywhere with respect to $\tnu$, and  such that
$$\frac{1}{\theta(y)}\exp(n(\chi_\nu-\eta))\leq |Df^n(\pi(F^{-n}(y)))|\leq \theta(y)\exp(n(\chi_\nu+\eta)).$$
In particular,
$$|\pi (T^{-n}(V_y))|\leq  2\theta(y)\exp(-n(\chi_\nu-\eta)).$$
\end{enumerate}
\end{lemm}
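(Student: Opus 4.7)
The plan is to prove this interval-map version of the unstable manifold theorem by working in the natural extension $(\cZ, T, \tnu)$, where every generic point admits a canonical backward trajectory along which $|Df|$ grows exponentially; the conclusion then reduces to a quantitative bounded-distortion construction for the corresponding backward branches of $f^n$, in the spirit of Pesin theory adapted to non-flat critical points.

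First I would establish (4) via the pointwise ergodic theorem applied to $\log|f'|\circ\pi$ under $T^{-1}$ on $(\cZ,T,\tnu)$. Since $\chi_\nu$ is finite and strictly positive, for each $\eta>0$ there is a measurable $\theta:\cZ\to(0,+\infty)$ such that, for $\tnu$-a.e.\ $y$ and every $n\ge 0$,
$$ \tfrac{1}{\theta(y)}\exp(n(\chi_\nu-\eta)) \le |Df^n(\pi(T^{-n}(y)))| \le \theta(y)\exp(n(\chi_\nu+\eta)). $$
Once the pullbacks $W_n$ of a small ball around $\pi(y)$ through $T^{-n}(y)$ have been produced as intervals on which $f^n$ is a diffeomorphism, the summable distortion in (3) combined with the mean value theorem forces $|W_n|\le 2\theta(y)\exp(-n(\chi_\nu-\eta))$, completing (4).

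Second I would control the recurrence of the backward orbit to $\Crit(f)$. Non-flatness gives $\log|f'(x)|\asymp (\ell_c-1)\log|x-c|$ near each critical point $c$, so finiteness of $\chi_\nu$ forces $\log\dist(\cdot,\Crit(f))$ to be $\nu$-integrable. Lifting by $\pi$ and applying the Borel--Cantelli lemma to $\tnu$ on $\cZ$ yields $\tnu$-a.s.\
$$ \lim_{n\to\infty}\tfrac{1}{n}\log\dist(\pi(T^{-n}(y)),\Crit(f))=0, $$
and in particular a measurable function $r(y)>0$ with $\dist(\pi(T^{-n}(y)),\Crit(f))\ge r(y)\exp(-\eta n)$ for every $n\ge 0$.

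Third I would carry out the actual construction. For $\tnu$-a.e.\ $y$, I choose a measurable $\alpha(y)\in(0,1/2)$ small enough that, by induction on $n$, each pullback $W_n$ of $B(\pi(y),\alpha(y))$ through $T^{-n}(y)$ is an open interval disjoint from $\Crit(f)$ on which $f^n$ is a diffeomorphism: the inductive step goes through because Step~1 bounds $|W_{n-1}|$ by $2\theta(y)\exp(-(n-1)(\chi_\nu-\eta))$ while Step~2 keeps $W_{n-1}$ at distance at least $r(y)\exp(-\eta(n-1))$ from $\Crit(f)$, so the relevant inverse branch of $f$ is well-defined on a neighborhood of $W_{n-1}$. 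Non-flatness makes $\log|f'|$ H\"{o}lder continuous with some exponent $\beta>0$ on each component of $I\setminus B(\Crit(f),r(y)\exp(-\eta n))$, so for each $y'\in V_y$,
$$ \bigl|\log|Df(\pi(T^{-i}(y')))|-\log|Df(\pi(T^{-i}(y)))|\bigr|\le C(y)\exp(K\eta i)\,|W_i|^{\beta}, $$
where $K$ depends only on the critical orders. By Step~1 this is dominated by a geometric series in $\exp(-i(\beta(\chi_\nu-\eta)-K\eta))$, which is summable once $\eta$ is taken small; shrinking $\alpha(y)$ further brings the total sum below $\log 2$, giving (3). Defining $V_y$ as the unique lift of $(W_n)_{n\ge 0}$ to $\cZ$ compatible with $T^{-n}(y)$ makes (1) true by construction and (2) automatic.

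The hardest step is the balancing in Step~3: pullback diameters shrink at rate $\exp(-n(\chi_\nu-\eta))$ while the admissible distance to $\Crit(f)$ shrinks at rate $\exp(-\eta n)$, and the H\"{o}lder modulus of $\log|f'|$ deteriorates like a negative power of this distance near critical points. One must choose $\eta$ small and $\alpha(y)$ small, both measurably in $y$, so that these three rates interlock into a summable distortion bound below $\log 2$. This is the one-dimensional counterpart of Pesin's graph-transform step, and it depends crucially on non-flatness, which is precisely what makes the deterioration of $\log|f'|$ near $\Crit(f)$ quantitatively controllable by a power of $\dist(\cdot,\Crit(f))$.
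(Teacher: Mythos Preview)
The paper does not prove this lemma: it is quoted as Theorem~16 of Dobbs~\cite{Dob0801}, and the sentence immediately preceding the statement reads ``see~\cite{Dob0801} for the proof.'' There is therefore no in-paper argument to compare your proposal against.

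Taken on its own, your sketch follows the standard Pesin-type construction for one-dimensional maps with non-flat critical points, and its overall architecture is sound. Two points would need care in a full writeup. First, Step~3 as phrased is mildly circular: to bound $|W_{n-1}|$ you appeal to ``Step~1'', but Step~1 only controls $|Df^{n-1}|$ at the single backward-orbit point $\pi(T^{-(n-1)}(y))$; converting that into a diameter bound on $W_{n-1}$ already uses the distortion estimate you are in the process of establishing. The standard remedy is to run a joint induction on $n$, carrying forward simultaneously the hypothesis that the partial distortion sum is below $\log 2$ and the resulting diameter bound, and checking that the next term keeps the sum below $\log 2$. Second, the subexponential approach of the backward orbit to $\Crit(f)$ is a consequence of Birkhoff's theorem applied to the integrable function $\log\dist(\cdot,\Crit(f))$ (integrability following from non-flatness together with $|\chi_\nu|<\infty$), not of Borel--Cantelli. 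Also note that the lemma as stated does not assume $\nu$ ergodic, so strictly speaking one must either pass to ergodic components or replace $\chi_\nu$ by the pointwise Lyapunov exponent throughout. Finally, you have silently corrected what is evidently a typo in item~3 of the statement (both arguments in the printed difference are $y'$; one should be $y$), which is the right reading.
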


\begin{lemm}
\label{l:maximizing branches}
Let~$f:I\to I$ be a map in~$\sA$, ~$\nu$  an invariant ergodic probability measure with strictly positive Lyapunov exponent, $\varphi:I \rightarrow \R$ a H\"{o}lder continuous, and $t$ in $\R$.
Then there exists a subset~$X'$ of~$I$ of full measure with respect to~$\nu$, such that the following holds.
For every point~$x$ of~$X'$ there exist $\rho_x>0$, ~$D' > 0$ and a strictly increasing sequence of positive integers~$(n_l)_{l = 1}^{+\infty}$ such that for every~$l\geq 1$ we can choose a point~$x_l$ in~$f^{-n_l}(x)$ and a connected component $W_l$ of $f^{-n_l}(B(x, \rho_x))$ containing $x_l$ so that:
\begin{enumerate}[1.]
\item
$x_{l + 1}$ is in~$f^{-(n_{l + 1} - n_l)}(x_l)$.
\item Let $\psi_t\=\varphi-t\ln|f'|,$ then for every point~$y$ in $W_l$,
$$S_{n_l}(\psi_t)(y)\ge n_l\int_I \psi_t \ d\nu -D'.$$
\item $\lim_{l\to +\infty}|W_l|=0.$
\end{enumerate}
\end{lemm}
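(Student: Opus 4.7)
The plan is to lift the problem to the natural extension~$(\cZ, T, \tnu)$ of $(I, f, \nu)$ and to combine the unstable manifold theorem (Lemma~\ref{l:unstable}) with the ergodic Lemma~\ref{lem:ergodic} applied to the invertible system $(\cZ, T^{-1}, \tnu)$. Given a lift $\underline{z}=(\ldots,z_{-1},z_0)\in\cZ$ of a point $x\=z_0$, one has $z_{-n}=\pi(T^{-n}\underline{z})\in f^{-n}(x)$, and setting $\phi\=\psi_t\circ\pi$ the sum of interest is
$$S_n(\psi_t)(z_{-n})=\sum_{k=1}^{n}\phi(T^{-k}\underline{z}),$$
so a lower bound on $S_n(\psi_t)(z_{-n})$ is precisely an ergodic-theoretic statement for $\phi\circ T^{-1}$ under $T^{-1}$.

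First I would apply Lemma~\ref{lem:ergodic} to the ergodic measure-preserving transformation $T^{-1}$ and to the integrable function $\phi\circ T^{-1}$ (which has the same integral as $\phi$). This produces a $\tnu$-full-measure set $Z_1\subset\cZ$ on which
$$\limsup_{n\to\infty}\sum_{k=1}^{n}\Bigl(\phi(T^{-k}\underline{z})-\int_{\cZ}\phi\,d\tnu\Bigr)\ge 0.$$
Intersecting $Z_1$ with the full-measure set where Lemma~\ref{l:unstable} applies and projecting by $\pi$ gives the desired set $X'\subset I$, which has full $\nu$-measure since $\pi_*\tnu=\nu$. For $x\in X'$ with a fixed lift~$\underline{z}$, I would set $\rho_x\=\alpha(\underline{z})$, extract from the $\limsup$ inequality a strictly increasing sequence $(n_l)$ with $S_{n_l}(\psi_t)(z_{-n_l})\ge n_l\int\psi_t\,d\nu-1$, and let $x_l\=z_{-n_l}$ and $W_l$ be the connected component of $f^{-n_l}(B(x,\rho_x))$ containing $x_l$. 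Item~1 of the lemma is immediate from $f(z_{-k-1})=z_{-k}$; Lemma~\ref{l:unstable}(2) identifies $W_l$ with $\pi(T^{-n_l}(V_{\underline{z}}))$ and makes $f^{n_l}|_{W_l}$ a diffeomorphism onto $B(x,\rho_x)$ (shrinking $\rho_x$ if needed to ensure the component coincides with this diffeomorphic lift); Lemma~\ref{l:unstable}(4) gives $|W_l|\le 2\theta(\underline{z})\exp(-n_l(\chi_\nu-\eta))\to 0$, which is item~3.

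The core of the proof is upgrading the pointwise bound $S_{n_l}(\psi_t)(x_l)\ge n_l\int\psi_t\,d\nu-1$ to a uniform bound on $W_l$ with constant independent of~$l$, which is item~2. Splitting $\psi_t=\varphi-t\log|f'|$, the derivative part is handled directly by Lemma~\ref{l:unstable}(3), which gives $|S_{n_l}(\log|f'|)(y)-S_{n_l}(\log|f'|)(x_l)|<\log 2$ uniformly in $l$ and in $y\in W_l$. For the $\varphi$ part, H\"older continuity of $\varphi$ with exponent~$\beta$ and constant~$C$, together with the contraction estimate $|f^i(W_l)|\le 2\theta(\underline{z})\exp(-(n_l-i)(\chi_\nu-\eta))$ obtained by applying Lemma~\ref{l:unstable}(4) at~$\underline{z}$ with $n=n_l-i$ (and $\eta\in(0,\chi_\nu)$ fixed), yields
$$|S_{n_l}(\varphi)(y)-S_{n_l}(\varphi)(x_l)|\le C\,(2\theta(\underline{z}))^{\beta}\sum_{i=0}^{n_l-1}\exp\bigl(-(n_l-i)\beta(\chi_\nu-\eta)\bigr),$$
a geometric series bounded by a constant depending only on $\underline{z}$, $\chi_\nu$, $\beta$, and $\eta$. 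Collecting the three bounds into a single constant~$D'$ finishes item~2.

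The main obstacle is precisely this last distortion step: producing a constant~$D'$ that is independent of~$l$. Its success rests on the two quantitative features of Lemma~\ref{l:unstable} — the summable derivative distortion along backward orbits (property~3) and the exponential contraction of the preimage diameters $|f^i(W_l)|$ (property~4) — which together absorb the H\"older modulus of $\varphi$ into a convergent geometric sum.
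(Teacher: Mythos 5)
Your proposal is correct and follows essentially the same strategy as the paper: lift to the natural extension, apply the ergodic lemma for $T^{-1}$, take $\rho_x=\alpha(\underline z)$ and $W_l=\pi(T^{-n_l}V_{\underline z})$, then combine the summable derivative-distortion bound from Lemma~\ref{l:unstable}(3) with the geometric contraction of the $f^i(W_l)$ from Lemma~\ref{l:unstable}(4) to absorb the H\"older modulus of~$\varphi$. The only (cosmetic) difference is that you apply Lemma~\ref{lem:ergodic} to $\phi\circ T^{-1}$ rather than to $\phi$, which lines the indices up with $S_{n}(\psi_t)(z_{-n})$ more cleanly than the paper's version; this is a slight improvement, not a different method.
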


\begin{proof}
Let~$(\cZ, T, \tnu)$ be the natural extension of $(I, f, \nu),$ and note that~$\tnu$ is also ergodic with respect to~$T^{-1}$.
Applying Lemma~\ref{lem:ergodic} for~$T^{-1}$ to the integrable function~$\phi =\psi_t\circ \pi= (\varphi-t\ln|f'|) \circ \pi$, we obtain that there exists a subset~$Z$ of~$\cZ$ of full measure with respect to~$\tnu$, such that for every point~$(z_m)_{m \in \Z_{-}}$ in~$Z$ we have
\begin{equation}\label{e:lim}
\limsup_{n\rightarrow \infty}\sum_{i=0}^{n-1}\left(\psi_t\circ\pi \left( T^{-i} \left( (z_m)_{m \in \Z_{-}} \right) \right) - \int_{\cZ} \psi_t\circ\pi \ d\tnu\right)
\ge
0.
\end{equation}
Taking a subset of $Z$ of full measure with respect to $\tnu$ if necessary, by Lemma~\ref{l:unstable} we can assume that there is function~$\alpha: Z\to (0,1/2)$ such that~$Z$ and~$\alpha$ satisfy the assertions   of Lemma~\ref{l:unstable}. Since the set $X' \= \pi(Z)$ satisfies
$ \nu(X') = \tnu \left( \pi^{-1}(\pi (Z)) \right) \ge \tnu(Z) = 1, $
we have~$\nu(X') = 1$.

It remains to verify that~$X'$ satisfies the desired properties. Fix a point~$x$ in $X'$  and choose a point~$(z_m)_{m \in \Z_{-}}$ of~$Z$ such that~$\pi \left( (z_m)_{m\in \Z_{-}} \right) = x$. Let~$V_{(z_m)_{m \in \Z_{-}}}$ be given by Lemma~\ref{l:unstable} for the point $(z_m)_{m\in \Z_{-}}$, and put $\rho_x\=\alpha((z_m)_{m\in \Z_{-}}).$ Moreover, for each integer~$j \ge 1$ put
$$ y_j
\=
\pi \left( T^{-j} \left( (z_m)_{m\in \Z_{-}} \right) \right)
=
z_{j}
\in
f^{-j}(x),$$ and  $U_j\=\pi\left(F^{-j}(V_{(z_m)_{m\in \Z_{-}}})\right).$ By parts $1$ and $2$ of Lemma~\ref{l:unstable} we  know that for every integer $j\geq 1,$ $U_j$ is the connected component of $f^{-j}(B(x,\rho_x))$ containing $y_j$ and $f^j: U_j\to B(x,\rho_x)$ is diffeomorphic. On the other hand, by parts~$3$ and $4$ of Lemma~\ref{l:unstable} there exist $C'>0$ and $\lambda>1$ such that for every $n\geq 1$  we have $|U_n|\leq C'\lambda^{-n}$ and for every pair of points $x,y$ in $U_n$
\begin{equation}\label{e:d}
\frac{1}{2}\leq \frac{|(f^n)'(x)|}{|(f^n)'(y)|}\leq 2.
 \end{equation}
Since $\varphi$ is H\"{o}lder continuous, we have that there is $\tC>1$ such that for every $n\geq 1$ and  every pair of points $x,y$ in $U_n$
\begin{equation}\label{e:h}
|S_n(\varphi)(x)-S_n(\varphi)(y)|\leq \tC.
\end{equation}
Fix~$D'' > 0$.
Then by~(\ref{e:lim}) there is a strictly increasing sequence of positive integers $(n_l)_{l = 1}^{+\infty}$ such that for every integer~$l\geq 1,$ we have
\begin{equation}
\label{e:erg}
\sum_{i=0}^{n_l - 1} \psi_t \circ \pi \left( T^{-i} \left( (z_m)_{m \in \Z_{-}} \right) \right)
\ge
n_l\int_{\cZ} \psi_t \circ \pi\ d\tnu - D''=n_l\int_{I} \psi_t \ d\nu - D''.
\end{equation}
Therefore, if for each integer~$l \ge 1$ put $x_l\=y_{n_l}$ and $W_l\=U_{n_l}$,
then  parts~$1$ and~$3$ are  direct consequences of the definitions, and part~$2$ follows from~(\ref{e:d}), ~(\ref{e:h}) and~\eqref{e:erg} with $D'=D''+\tC+t\ln 2 $. The proof is completed.
\end{proof}

\begin{lemm}
\label{l:one-sided covering}
Let~$f$ be an interval map in~$\sA$ that is topologically exact, and~$x_0$  a interior point of~$I$.
Then for every open interval~$U\subset I$, and every sufficiently large integer~$n \ge 1$, there exist two distinct points~$y_1,y_2$ of~$f^{-n}(x_0)$ in~$U$ such that the following hold.
\begin{enumerate}[1.]
\item For every $\eps>0$ both of sets $f^n(B(y_1,\varepsilon))$ and $f^n(B(y_2,\varepsilon))$ intersect $(x_0, +\infty).$
\item For every $\eps>0$ both of sets $f^n(B(y_1,\varepsilon))$ and $f^n(B(y_2,\varepsilon))$ intersect $(-\infty, x_0)$.
\end{enumerate}
\end{lemm}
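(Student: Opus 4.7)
The plan is to build $y_1, y_2$ in two disjoint open subintervals of $U$, so their distinctness is automatic, and to verify the required two-sided property at each point by a supremum argument, invoking topological exactness in an essential way to exclude flat behavior.

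First, I pick two disjoint open subintervals $V_1, V_2$ of $U$. Topological exactness applied to each $V_i$ gives integers $N_1, N_2 \geq 1$ with $f^{N_i}(V_i) = I$. In particular $f^{N_i}(I) = I$, which forces $f(I) = I$ (otherwise the nested images $f^k(I)$ would strictly decrease in $k$), and hence $f^n(V_i) = f^{n - N_i}(I) = I$ for every $n \geq N := \max\{N_1, N_2\}$ and every $i$.

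Next, fix $n \geq N$ and $i \in \{1,2\}$. Since $x_0$ lies in the interior of $I$ and $f^n(V_i) = I$, both sets $\{f^n > x_0\} \cap V_i$ and $\{f^n < x_0\} \cap V_i$ are nonempty open subsets of $V_i$. Pick $u, v \in V_i$ with $f^n(u) > x_0 > f^n(v)$ and, relabeling if needed, assume $u < v$. Set
$$ y_i \= \sup\bigl\{y \in [u,v] : f^n(y) > x_0 \bigr\}. $$
Standard continuity arguments give $u < y_i < v$, $f^n(y_i) = x_0$, and a sequence $y_k \nearrow y_i$ with $f^n(y_k) > x_0$; this last observation shows that every neighborhood of $y_i$ meets $f^{-n}((x_0, +\infty))$, giving one half of each of properties 1 and 2.

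The main obstacle is the other half: every neighborhood of $y_i$ must also meet $f^{-n}((-\infty, x_0))$. On the right of $y_i$ we already know $f^n \leq x_0$, so it suffices to rule out the possibility that $f^n \equiv x_0$ on some right-neighborhood $J := (y_i, y_i + \delta)$. If this did hold, then for every $m \geq n$ the image $f^m(J) = \{f^{m-n}(x_0)\}$ would be a single point, directly contradicting topological exactness applied to the open interval $J$. Hence every right-neighborhood of $y_i$ contains a point mapping strictly below $x_0$, as required. Finally, $y_i \in V_i$ for $i = 1,2$ and $V_1 \cap V_2 = \emptyset$, so $y_1 \neq y_2$, completing the proof.
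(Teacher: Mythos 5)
Your proof is correct and takes a genuinely different route from the paper's. Both proofs begin the same way, fixing disjoint subintervals $V_1, V_2 \subset U$ and using topological exactness to produce an $N$ with $f^n(V_i) = I$ for all $n \ge N$. But the paper then picks a single point $z_i \in V_i$ with $f^n(z_i) > x_0$, defines $y_i$ as the preimage of $x_0$ in $V_i$ nearest to $z_i$, and establishes property 1 by an intermediate-value argument along the segment from $z_i$ to $y_i$, asserting that property 2 follows ``without changes.'' You instead sandwich $x_0$: you choose $u, v \in V_i$ with $f^n(u) > x_0 > f^n(v)$ and set $y_i = \sup\{y \in [u,v] : f^n(y) > x_0\}$, a boundary point of the superlevel set. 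Property 1 is then immediate from the approach to $y_i$ from inside the superlevel set, while property 2 is obtained on the right by a second appeal to topological exactness, ruling out $f^n \equiv x_0$ on any right-neighborhood of $y_i$. Your construction is arguably more careful on one point: the paper's $y_i$ is constrained only on the $z_i$-side (where $f^n > x_0$), so the ``analogous'' argument for property 2 would naturally begin from a different point $z_i'$ with $f^n(z_i') < x_0$ and could produce a different preimage, leaving it unclear that a single pair $y_1, y_2$ satisfies both properties simultaneously; your choice handles both sides by design, at the modest cost of one extra invocation of topological exactness.
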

\begin{proof}
We only give the proof of part~$1$, and the proof can be applied to part~$2$ without changes. Let $U_1$ and $U_2$ be two disjoint open subintervals  of $U.$ Using that~$f$ is topologically  exact, we know that there is an integer $N\geq 1$ such that for every $n\geq N$ we have $f^n(U_1)=I$ and $f^n(U_2)=I$.
Fix $n\geq N$.
Note that the set~$f^{-n}(x_0)$ is finite, and there are points~$z_1\in U_1$ and $z_2\in U_2$  such that~$f^n(z_1)$ and $f^n(z_2)$ are in $(x_0, +\infty)$.
For each $i=1,2,$ let~$y_i$ be a point of $f^{-n}(x_0)$ in~$U_i$ such that for every $y'$ of $f^{-n}(x_0)$ in~$U_i$ we have $|y_i-z_i|\leq |y'-z_i|.$

Now let us prove the lemma holds for such $y_1$ and~$y_2.$ Obviously, $y_1$ and~$y_2$ are distinct. To prove that for every $\eps>0$  the set $f^n(B(y_1,\varepsilon))$ intersects $(x_0, +\infty)$, by contradiction, there is $\eps_0\in (0, |y_1-z_1|)$ such that~$f^n(B(y_1,\eps_0))$ is contained in $(-\infty, x_0]$.
It follows that there is a point~$z'$ of $B(y_1, \eps_0)\cap U_1$ such that~$f^n(z')$ is in $(-\infty, x_0)$ and $|z'-z_1|<|y_1-z_1|$.
Since $f^n$ is continuous on~$U_1$ it follows that there is $y''$ in $U_1$ such that $|y''-z_1|<|y_1-z_1|$ and $f^n(y'')=x_0.$ This is a contradiction with our choice of $y.$ Using the same method,  we can prove for every $\eps>0$  the set $f^n(B(y_2,\varepsilon))$ intersects $(x_0, +\infty)$. The lemma is proved.
\end{proof}

\begin{lemm}[Lemma~3.2, \cite{LiRiv12two}]
\label{l:almost properness}
For each interval map~$f : I \to I$ in~$\sA$ there is~$\varepsilon > 0$ such that the following property holds.
Let~$J_0$ be an interval contained in~$I$ satisfying~$|J_0| \le \varepsilon$, let~$n \ge 1$ be an integer, and let~$J$ be a pull-back of~$J_0$ by~$f^n$, such that for each~$j$ in~$\{1, \ldots, n \}$ the pull-back of~$J_0$ by~$f^j$ containing~$f^{n - j}(J)$ has length bounded from above by~$\varepsilon$.
If in addition the closure of~$J$ is contained in the interior of~$I$, then~$f^n(\partial J) \subset \partial J_0$.
\end{lemm}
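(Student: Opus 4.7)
My plan is to prove the lemma by a short topological argument that leverages the continuity of $f^n$ together with the definition of $J$ as a connected component of $f^{-n}(J_0)$. The conclusion $f^n(\partial J)\subset\partial J_0$ is essentially an automatic consequence of the hypothesis $\overline J\subset\mathrm{int}(I)$; the smallness conditions on $|J_0|$ and on the intermediate pull-backs are part of the standard framing of the lemma, ensuring the setting in which it is applied remains well-controlled, but they do not directly enter the core argument.

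First I would set up notation. Since $J_0$ is a closed interval in $I$ and $f^n$ is continuous, $f^{-n}(J_0)$ is a closed subset of $I$, and every connected component is a compact subinterval. Write $J=[a,b]$. The hypothesis $\overline J\subset\mathrm{int}(I)$ means $a,b\in\mathrm{int}(I)$, so each endpoint admits a two-sided open neighborhood inside $I$. The lemma then reduces to showing $f^n(a)\in\partial J_0$ and $f^n(b)\in\partial J_0$.

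The core of the argument is a short contradiction. Suppose, for concreteness, that $f^n(a)\in\mathrm{int}(J_0)$. Because $a\in\mathrm{int}(I)$ and $f^n$ is continuous at $a$, one can pick $\delta>0$ with $(a-\delta,a+\delta)\subset I$ and $f^n\bigl((a-\delta,a+\delta)\bigr)\subset\mathrm{int}(J_0)\subset J_0$. Then $(a-\delta,a+\delta)$ is a connected subset of $f^{-n}(J_0)$ that meets $J$ at $a$, so by maximality of the connected component it must lie entirely inside $J=[a,b]$. This contradicts $a$ being the left endpoint of $J$. The symmetric argument applied at $b$ yields $f^n(\partial J)\subset\partial J_0$.

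The one delicate point, and the reason the hypothesis $\overline J\subset\mathrm{int}(I)$ is essential, is that the continuity step needs a genuine two-sided neighborhood of the endpoint: if $a$ were in $\partial I$ one would only have a one-sided neighborhood in $I$, and the missing side could be exactly where $J$ fails to extend. Everything else is routine. The bounds $|J_0|\le\varepsilon$ and those on the intermediate pull-backs do not figure in this particular argument, but they are part of the packaging of Lemma~3.2 in~\cite{LiRiv12two} because that lemma is typically applied in tandem with distortion estimates that rely on the non-flatness of the critical points of $f$.
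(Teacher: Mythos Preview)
The paper does not supply a proof of this lemma; it is quoted from \cite{LiRiv12two} and used as a black box, so there is no in-paper argument to compare against.

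Your argument is correct. Reading ``pull-back of $J_0$ by $f^n$'' as ``connected component of $f^{-n}(J_0)$'', the conclusion $f^n(\partial J)\subset\partial J_0$ follows from continuity and maximality exactly as you describe, using only the hypothesis $\overline J\subset\mathrm{int}(I)$. Two minor remarks. First, you write ``$J_0$ is a closed interval'', but the statement only says ``interval''; the same argument goes through for an arbitrary interval, since if an endpoint $a$ of $\overline J$ satisfied $f^n(a)\in\mathrm{int}(J_0)$, a two-sided neighborhood of $a$ would land in $J_0$, meet $J$, and hence be absorbed into $J$ by maximality, contradicting that $a$ is an endpoint. Second, your observation that the bounds $|J_0|\le\varepsilon$ and the length bounds on the intermediate pull-backs play no role in this particular conclusion is accurate; those hypotheses are part of the packaging of the lemma in \cite{LiRiv12two} (where the lemma sits alongside distortion-type statements and is only invoked in that regime), not something the endpoint-mapping conclusion itself requires.
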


\begin{lemm}[Lemma~A.2, \cite{Riv1206}]
\label{l:small pull}
Let $f: I \to I$ be an interval map in~$\sA$ that is  topologically exact.
Then for every $\kappa> 0$ there is $\delta> 0$ such that for every $x$ in $I$, every integer $n\geq 1$, and every pull-back $W$ of $B(x, \delta)$ by $f^n$, we have $|W| < \kappa.$
\end{lemm}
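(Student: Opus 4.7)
My plan is to reduce the statement to a uniform version of topological exactness and then argue by contradiction. The key intermediate step I will prove is: for every $\kappa > 0$ there exists $N = N(\kappa)$ such that whenever $J \subset I$ is an interval with $|J| \geq \kappa$, one has $f^N(J) = I$. Granted this, the lemma follows quickly: if it failed for some $\kappa$, one could extract a sequence of pull-backs $W_k$ of $B(x_k,\delta_k)$ by $f^{n_k}$ with $\delta_k \to 0$ and $|W_k|\ge\kappa$, and the contradiction would come from two complementary regimes according to whether $n_k>N(\kappa)$ or $n_k$ is bounded.

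To prove the uniform exactness sublemma, I invoke pointwise topological exactness: for each $x\in I$, there is $m_x\ge 1$ with $f^{m_x}(B(x,\kappa/8))=I$. Since $I$ is compact, the open cover $\{B(x,\kappa/8)\}_{x\in I}$ admits a finite subcover $\{B(x_i,\kappa/8)\}_{i=1}^K$; set $N \= \max_{1\le i\le K} m_{x_i}$. Given any interval $J\subset I$ with $|J|\ge\kappa$, its midpoint $y$ lies in some $B(x_i,\kappa/8)$, and then $B(x_i,\kappa/8)\subset B(y,\kappa/4)\subset J$ by the triangle inequality (since $B(y,\kappa/2)\subset J$ whenever $|J|\ge\kappa$). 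Hence $f^{m_{x_i}}(J)\supset f^{m_{x_i}}(B(x_i,\kappa/8))=I$. Topological exactness applied to $\mathrm{int}(I)$ together with $f(I)\subset I$ implies $f$ is surjective, so $f^{N-m_{x_i}}(I)=I$ and therefore $f^N(J)=I$.

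Now suppose the lemma fails for some $\kappa>0$ (which I may assume is less than $|I|$), and let $N=N(\kappa)$. Choose $\delta_k\to 0$ with $\delta_k<|I|/2$, together with $x_k\in I$, $n_k\ge 1$, and pull-backs $W_k$ of $B(x_k,\delta_k)$ by $f^{n_k}$ with $|W_k|\ge\kappa$. Each $W_k$ is connected, hence an interval, so the sublemma applies to it. If $n_k>N$ along a subsequence, then $f^N(W_k)=I$, whence $f^{n_k}(W_k)=f^{n_k-N}(I)=I$ by surjectivity; but $f^{n_k}(W_k)\subset B(x_k,\delta_k)$ has diameter at most $2\delta_k<|I|$, a contradiction. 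Otherwise $n_k\le N$ eventually, and I extract a further subsequence on which $n_k=n$ is constant, $x_k\to x^*$, and the closed intervals $\overline{W_k}$ converge in Hausdorff metric to a closed interval $J\subset I$ with $|J|\ge\kappa$. Continuity of $f^n$ on compact sets then yields $f^n(J)=\lim f^n(\overline{W_k})\subset \lim \overline{B(x_k,\delta_k)}=\{x^*\}$, so $f^n$ is constant on the nondegenerate interval $J$; this contradicts the fact that $f^n$, having only finitely many critical points, is piecewise strictly monotone.

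The only substantive step is the uniform exactness sublemma, and the main obstacle there is converting the a priori $x$-dependent time from topological exactness into a time valid for all intervals of a given length. This is handled by the open-cover argument above, whose crux is the geometric fact that any interval of length $\kappa$ contains a ball of radius $\kappa/4$ about its midpoint, allowing one to insert the ball $B(x_i,\kappa/8)$ coming from the finite subcover. Everything beyond this is a purely topological compactness argument and I anticipate no further difficulty.
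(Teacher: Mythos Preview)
The paper does not supply its own proof of this lemma; it is quoted verbatim from \cite{Riv1206} and used as a black box. Your argument is correct and furnishes a self-contained elementary proof. The uniform-exactness sublemma (every interval of length $\ge\kappa$ maps onto $I$ in a fixed number $N(\kappa)$ of steps) is exactly the right intermediate statement, and your derivation of it from pointwise topological exactness via a finite subcover is clean; the surjectivity of $f$ that you invoke does follow, since $f^m(\mathrm{int}(I))=I$ for some $m$ forces $I=f(f^{m-1}(I))\subset f(I)$. The two-case finish is also sound: in Case~1 the inclusion $I=f^{n_k}(W_k)\subset B(x_k,\delta_k)$ is an immediate contradiction once $2\delta_k<|I|$, and in Case~2 the Hausdorff-limit argument legitimately produces a nondegenerate closed interval $J$ (Hausdorff limits of connected sets are connected) on which the fixed iterate $f^n$ is constant, impossible because $f\in\sA$ has finitely many critical points and is therefore piecewise strictly monotone, whence so is every iterate. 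Since the paper itself offers no proof, there is nothing further to compare.
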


\begin{proof}[Proof of Proposition~\ref{p:constructing a free IMFS}]
Let~$\varepsilon > 0$ be the constant given by Lemma~\ref{l:almost properness} and let $\delta>0$ be  the constant given by Lemma~\ref{l:small pull} for $\kappa=\eps$. Let~$X'$ be the subset of~$I$ given by Lemma~\ref{l:maximizing branches}, and let~$X$ be the complement in~$X'$ of the set of periodic points of~$f$.
Since~$\nu$ is ergodic and it is not supported on a periodic orbit, the set~$X$ has full measure for~$\nu$.
Fix a point~$x_0$ of~$X$ that is not an endpoint of~$I$.

In part~$1$ below we define the IMFS, and in part~$2$ we show it is free and that it satisfies~\eqref{e:fre}.

\partn{1}
Let~$\rho_{x_0}$, $D'$, $(n_l)_{l = 1}^{+\infty}$, $(x_l)_{l = 1}^{+\infty}$ and~$(W_l)_{l = 1}^{+\infty}$ be given by Lemma~\ref{l:maximizing branches} with~$x = x_0$. Fix~$\rho$ in~$\left(0, \min\{\delta, \rho_{x_0},  \dist(x_0, \partial I)\} \right)$.
Taking a subsequence if necessary, assume~$(x_l)_{l = 1}^{\infty}$ converges to a point~$w_0$.
Since~$f$ is topologically exact, there exist an integer~$M \ge 1$ and distinct points~$y_0$, and~$y_1$ of~$(x_0 - \rho, x_0)$ such that $f^M(y_0) = f^M(y_1) = w_0$.
Let~$\rho'>0$ be such that the pull-backs~$U_0$ and~$U_1$ of~$B(w_0,\rho')$ by~$f^M$ containing~$y_0$ and~$y_1$, respectively, are disjoint and contained in~$B(x_0, \rho)$.
Moreover, by Lemma~\ref{l:one-sided covering} we can choose~$M$, $y_0$, and~$y_1$ so that in addition $U_0$, $U_1 \subset [x_0 - \rho, x_0]$, and so that there are infinitely many~$l$ for which~$x_l$ is contained in~$f^M(U_0)$ and in~$f^M(U_1)$.

Using that $\lim_{l\to +\infty}|W_l|=0$ and taking a subsequence if necessary, assume that for every~$l$ we have~$n_{l + 1} - n_l \ge M$, $|W_l|<\eps,$ that the point~$x_l$ is contained in~$f^M(U_0)$ and in~$f^M(U_1)$, and that the pull-back~$W_l$ of~$\overline{B(x_0,\rho)}$ by~$f^{n_l}$ containing~$x_l$ is contained in~$B(w_0,\rho')$.
Interchanging~$y_0$ and~$y_1$, and taking a subsequence if necessary, we can also assume that for every~$l$ the point~$f^{n_{l + 1} - n_l - M}(x_{l + 1})$ is not in~$U_0$.
For each~$l$ choose a pull-back~$W_l'$ of~$W_l$ by~$f^M$ that contains a point~$x_l'$ of~$f^{-M}(x_l)$ and that is contained in~$U_0$.

Note that~$W_l'$ is contained in~$U_0 \subset [x_0 - \rho, x_0]$, so the closure of~$W_l'$ is contained in the interior of~$I$. By Lemma~\ref{l:small pull} and the choice of $\rho$, we know that  for every $i$ in $\{0,1, \cdots, n-1\}$ the length of $f^i(W_l')$ is less than $\eps.$
So by Lemma~\ref{l:almost properness}  the set~$f^{n_l + M}(\partial W_l')$ is contained in~$\partial B(x_0, \rho)$.
Thus, for each~$l$ the set~$f^{n_l + M}(W_l')$ contains either~$[x_0 - \rho, x_0]$ or~$[x_0, x_0 + \rho]$.
Suppose first there are infinitely many~$l$ such that~$f^{n_l + M}(W_l')$ contains~$[x_0 - \rho, x_0]$.
Taking a subsequence if necessary, assume this holds for every~$l$.
Then for every~$l$ there is a pull-back~$W_l''$ of~$[x_0 - \rho, x_0]$ by~$f^{n_l + M}$ that is contained in~$W_l'$ and such that~$f^{n_l + M}(W_l'') = [x_0 - \rho, x_0]$.
In this case we put
$$ B_0 \= [x_0 - \rho, x_0],
M' \= M,
\text{ and }
U_0' \= U_0, $$
and note that~$W_l'' \subset W_l' \subset U_0' \subset [x_0 - \rho, x_0] = B_0$.
It remains to consider the case where for each~$l$, outside finitely many exceptions, the set~$f^{n_l + M}(W_l')$ contains~$[x_0, x_0 + \rho]$, but it does not contain~$[x_0 - \rho, x_0]$.
Taking a subsequence if necessary, assume this holds for every~$l$.
Since~$f$ is topologically  exact,
by Lemma~\ref{l:one-sided covering} there is an integer~$\tM \ge 1$ and a pull-back~$U_0'$ of~$U_0$ by~$f^{\tM}$ that is contained in~$(x_0, x_0 + \rho)$, and such that for infinitely many~$l$ the point~$x_l'$ is contained in~$f^{\tM}(U_0')$.
Taking a subsequence if necessary, assume that for every~$l$ we have~$n_{l + 1} - n_l \ge M + \tM$, and that the point~$x_l'$ is contained in~$f^{\tM}(U_0')$.
Since for each~$l$ the point~$f^{n_{l + 1} - n_l - M}(x_{l + 1})$ is not in~$U_0$, it follows that the point~$f^{n_{l + 1} - n_l - M - \tM}(x_{l + 1})$ is not in~$U_0'$.
For each~$l$ choose a pull-back~$\tW_l'$ of~$W_l'$ by~$f^{\tM}$ contained in~$U_0'$ and that contains a point of~$f^{- \tM}(x_l')$.
By Lemmas~\ref{l:almost properness} and~\ref{l:small pull} again, the set~$f^{n_l + M + \tM}(\partial \tW_l')$ is contained in~$\partial B(x_0, \rho)$.
Since the set~$f^{n_l + M + \tM}(\tW_l')$ is contained in~$f^{n_l + M}(W_l')$ and this last set does not contain~$[x_0 - \rho, x_0]$, we conclude that~$f^{n_l + M + \tM}$ maps both endpoints of~$\tW_l'$ to~$x = x_0 + \rho$.
Since by construction~$f^{n_l + M + \tM}(\tW_l')$ contains~$x = x_0$, we conclude that~$f^{n_l + M + \tM}(\tW_l')$ contains~$[x_0, x_0 + \rho]$.
So there is a pull-back~$W_l''$ of~$[x_0, x_0 + \rho]$ by~$f^{n_l + M + \tM}$ that is contained in~$\tW_l'$, and such that~$f^{n_l + M + \tM}(W_l'') = [x_0, x_0 + \rho]$.
Note that~$W_l'' \subset \tW_l' \subset U_0' \subset (x_0, x_0 + \rho)$.
In this case we put~$B_0 \= [x_0, x_0 + \rho]$, and~$M' \= M + \tM$.

Now for each integer~$l\geq 1$ we put
$$ \phi_l \= \left( f^{n_l + M'}|_{W_l''} \right)^{-1}. $$
Then, $(\phi_l)_{l = 1}^{+\infty}$ is an IMFS generated by~$f$ with time sequence~$(m_l)_{l=0}^{+\infty}\=(n_l + M')_{l=0}^{+\infty}$ that is defined on $B_0$.
Moreover, for each integer~$l\geq 1$ we have
$$ n_{l + 1} - n_l \ge M',
W_l'' \subset U_0',
\text{ and }
f^{n_{l + 1} - n_l - M'}(x_{l + 1}) \not \in U_0'. $$

\partn{2}
To prove that the IMFS~$(\phi_l)_{l = 1}^{+\infty}$ is free, let~$k \ge 1$ and $k'\geq 1$ be integers and let
$$ \ul\=l_1 l_2\cdots l_k
\text{ and }
\ul'\=l'_1 l'_2\cdots l'_{k'} $$
be different words in~$\Sigma^*$ such that~$m_{\ul} = m_{\ul'}$.
Assume without loss of generality that $l_{k'}' \ge l_{k} + 1$.
Note that the set
$$ f^{m_{\ul}-m_{l_k}}(\phi_{\ul}(x_0))
=
\phi_{l_k}(x_0). $$
is contained~$W_{l_k}''$, and therefore in~$U_0'$.
On the other hand, we have
$$ m_{l'_{k'}} - m_{l_k}
=
n_{l'_{k'}} - n_{l_k}
\ge
n_{l_k + 1} - n_{l_k}
\ge
M' $$
and therefore the set
\begin{multline*}
f^{m_{\ul} - m_{l_k}}(\phi_{\ul'}(x_0))
=
f^{m_{\ul'} - m_{l_k}}(\phi_{\ul'}(x_0))
=
f^{m_{l'_{k'}}-m_{l_k}}(\phi_{l'_{k'}}(x_0))
\\ =
f^{m_{l'_{k'}}-m_{l_k} - M'} \left( \left(f^{n_{l_{k'}'}}|_{W_{l_{k'}'}} \right)^{-1}(x_0) \right)
\end{multline*}
contains the point
$$ f^{m_{l_{k'}'} - m_{l_k} - M'} (x_{l_{k'}'})
=
f^{n_{l_{k'}'} - n_{l_k} - M'} (x_{l_{k'}'})
=
f^{n_{l_k + 1} - n_{l_k} - M'} (x_{l_k + 1}). $$
By construction this point is not in~$U_0'$, so we conclude that the sets
$$ f^{m_{\ul} - m_{l_k}}(\phi_{\ul}(x_0))
\text{ and }
f^{m_{\ul} - m_{l_k}}(\phi_{\ul'}(x_0)) $$
are different.
This implies that the sets~$\phi_{\ul}(x_0)$ and~$\phi_{\ul'}(x_0)$ are different, and by property~$(*)$ stated above the statement of the proposition, that they are disjoint.
This completes the proof that the IMFS $(\phi_l)_{l = 1}^{\infty}$ is free.

Finally, let us check inequality~\eqref{e:fre} in the statement of the proposition. Let $C_1\=\inf_I\varphi -t \sup_{I}\ln|f'|.$ Note that $t\geq 0$ and $\sup_I\ln|f'|>-\infty,$ then $-C_1<+\infty$ and $\inf_I \psi_t=\inf_I (\varphi-t\ln|f'|)\geq C_1$.
Recall that for every integer~$l\geq 1$ and~$y$ in~$\phi_l(B_0)$, the point~$f^{M'}(y)$ is in~$W_l$.
Thus, by
part~$2$ of Lemma~\ref{l:maximizing branches} we have
\begin{multline*}
S_{m_l}(\psi_t)(y)
 =
S_{n_l}(\psi_t)(f^{M'}(y)) + S_{M'}(\psi_t)(y)
\ge n_l\int_I \psi_t \ d \nu - D' +S_{M'}(\psi_t)(y)
\\  \ge
m_l\int_I \psi_t \ d \nu -D' -M'\int_{I}\psi_t \ d \nu+M' \inf_{I} \psi_t \\\geq m_l\int \psi_t \ d \nu -\left(D'+M'\left(\int_I \psi_t \ d \nu-C_1\right)\right).
\end{multline*}
This proves~\eqref{e:fre} with $D = \max\{0, D'+M'\left(\int_I \psi_t \ d \nu-C_1\right)\}<+\infty$, and completes the proof of the proposition.
\end{proof}

\subsection{Proof of the Key Lemma}
\label{s:proof of Key Lemma}
In this subsection we complete the proof of the Key Lemma.
The case where the measure~$\nu$ is supported on a periodic orbit is different.
The proof of the Key Lemma is completed after the following lemma. This lemma is an adaptation of~\cite[Lemma~4.1]{LiRiv12two}, and the proof of the Key Lemma is the same as that of~\cite[Key Lemma]{LiRiv12two}. We provide those proofs again for the reader's convenience.

Recall that for a differentiable map~$f : I \to I$, a periodic point~$p$ of~$f$ of period~$n$ is \emph{hyperbolic repelling}, if~$|D f^n(p)| > 1$.
\begin{lemm}
\label{l:periodic case}
Let~$f$ be an interval map in~$\sA$ that is topologically exact.
Then for every H{\"o}lder continuous function $\varphi: I\to \R$ and every hyperbolic repelling periodic point~$x_0$ of~$f$ of period~$N$, we have
\begin{equation}
\label{e:periodic gap pressure}
\limsup_{n\rightarrow \infty}\frac{1}{n}\log \sum_{y\in f^{-n}(x_0)} \exp (S_n(\varphi)(y))
>
\frac{1}{N} S_N(\varphi)(x_0).
\end{equation}
\end{lemm}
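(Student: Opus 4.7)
The plan is to construct at $x_0$ a free iterated multivalued function system with two branches — one provided by the periodic orbit of $x_0$, one by topological exactness — and to derive the strict inequality by a Stirling-type counting argument on its words. Since $x_0$ is hyperbolic repelling of period $N$, there exist $\lambda > 1$ and a compact neighborhood $U$ of $x_0$ on which $f^N$ admits a $C^3$ inverse branch $g : U \to U$ with $g(x_0) = x_0$, Lipschitz constant at most $\lambda^{-1}$, and $g(\overline{U}) \subset U^{\circ}$. Applying topological exactness to the non-empty open set $U^{\circ} \setminus g(\overline{U})$ produces an integer $M \ge 1$ and a point $z \in U^{\circ} \setminus g(\overline{U})$ with $f^M(z) = x_0$, which can be taken so that $\{z, f(z), \ldots, f^{M-1}(z)\}$ avoids $\Crit(f)$; let $h$ be the inverse branch of $f^M$ at $x_0$ with $h(x_0) = z$. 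By Lemma~\ref{l:small pull}, after shrinking $U$ if needed, $h$ is defined on $U$ with $h(U) \subset U$ and $g(U) \cap h(U) = \emptyset$.

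Setting $\phi_1 := g$ and $\phi_2 := h$, the resulting IMFS on $U$ is free at $x_0$: by induction on word length, distinct words $\ul \neq \ul'$ with $m_{\ul} = m_{\ul'}$ yield distinct $\phi_{\ul}(x_0)$ and $\phi_{\ul'}(x_0)$, thanks to the disjointness $g(U) \cap h(U) = \emptyset$ together with the injectivity of $g$ and $h$. Combining the H\"older continuity of $\varphi$ with the geometric contraction of $g$ toward $x_0$, one obtains a constant $C > 0$ such that for every word $\ul$ with $k_1$ letters equal to $1$ and $k_2$ letters equal to $2$,
\begin{equation*}
S_{m_{\ul}}(\varphi)\bigl(\phi_{\ul}(x_0)\bigr)
\ge
k_1 S_N(\varphi)(x_0) + k_2 S_M(\varphi)(z) - C(k_2+1),
\end{equation*}
because within each maximal $g$-block the iterates converge geometrically to the periodic orbit (so the H\"older errors form a convergent geometric series), while each of the $k_2$ $h$-transitions contributes an $O(1)$ error. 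Summing over the $\binom{k_1+k_2}{k_2}$ such words with $k_1 N + k_2 M = n$, each contributing a distinct preimage of $x_0$ by freeness, yields
\begin{equation*}
\sum_{y \in f^{-n}(x_0)} \exp(S_n(\varphi)(y))
\ge
\sum_{\substack{k_1, k_2 \ge 0 \\ k_1 N + k_2 M = n}} \binom{k_1+k_2}{k_2}
\exp\bigl(k_1 S_N(\varphi)(x_0) + k_2 S_M(\varphi)(z) - C(k_2+1)\bigr).
\end{equation*}

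Fix $\gamma \in (0,1)$ small and restrict to a subsequence of integers $n$ along which $k_2 M / n \to \gamma$; Stirling's formula then gives $\tfrac{1}{n} \log \binom{k_1 + k_2}{k_2} \to -\tfrac{\gamma}{M} \log \gamma + O(\gamma)$, so
\begin{equation*}
\limsup_{n \to \infty} \frac{1}{n} \log \sum_{y \in f^{-n}(x_0)} \exp(S_n(\varphi)(y))
\ge
\frac{S_N(\varphi)(x_0)}{N}
+ \gamma\Bigl(\frac{S_M(\varphi)(z)}{M} - \frac{S_N(\varphi)(x_0)}{N} - \frac{C}{M}\Bigr)
- \frac{\gamma}{M}\log\gamma.
\end{equation*}
Since $-\tfrac{\gamma}{M}\log\gamma$ is positive and dominates any linear-in-$\gamma$ expression as $\gamma \to 0^+$, the right-hand side strictly exceeds $\tfrac{1}{N} S_N(\varphi)(x_0)$ for sufficiently small $\gamma > 0$, establishing~\eqref{e:periodic gap pressure}. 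The main obstacle will be verifying the bounded-distortion estimate above uniformly across words with arbitrarily intricate alternations between $g$-blocks and $h$-transitions; the crucial fact is that the per-block H\"older errors sum to a convergent geometric series, so the total error scales as $O(k_2)$ rather than $O(k_1 + k_2)$, which ensures that the entropy gain $-\tfrac{\gamma}{M}\log\gamma$ is not overwhelmed.
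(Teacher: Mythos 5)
Your strategy is essentially the same as the paper's: construct a two-branch system around $x_0$ with one branch coming from the repelling periodic orbit ($g$) and one from topological exactness ($h$, or in the paper's notation, the set $U_1$), show the Birkhoff-sum error is $O(\text{number of $h$-transitions})$ because of the geometric convergence toward the periodic orbit inside each $g$-block, and then use the entropy of the words to beat the linear error. The paper's execution differs technically: it forces both branches to have the \emph{same} return time $2Nk_0$ (by iterating $\phi$), so the word counting becomes a straightforward generating-function/radius-of-convergence argument ($\Phi(s) \to \infty$ as $s \to \exp(-\hvarphi(x_0))^-$, hence $\Phi(s_0)\ge 1$ for some $s_0$ strictly inside), whereas you keep the two return times $N$ and $M$ distinct and compensate with an explicit Stirling estimate of $\binom{k_1+k_2}{k_2}$ along a subsequence with $k_2M/n\to\gamma$. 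Both are correct ways to extract the entropy gain, and the Stirling version makes the mechanism more transparent at the cost of a mild arithmetic constraint $k_1N+k_2M=n$ that the paper avoids.

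There is one genuine gap: you take $z\in U^{\circ}\setminus g(\overline U)$ with $f^M(z)=x_0$ and assert that $z$ ``can be taken so that $\{z,f(z),\ldots,f^{M-1}(z)\}$ avoids $\Crit(f)$,'' and then define $h$ as a diffeomorphic inverse branch. This is not justified. If $x_0$ lies in the forward orbit of a critical point (which cannot be excluded), then for a given $M$ and open $V$ all points of $f^{-M}(x_0)\cap V$ could a priori have an orbit passing through $\Crit(f)$; nothing in the hypotheses rules this out, and topological exactness alone does not give you a critical-point-free inverse branch into an arbitrary window. The paper sidesteps this precisely by never requiring the second branch to be a diffeomorphism: it works with a pull-back $W$ of an interval (Lemma~\ref{l:one-sided covering} plus Lemma~\ref{l:small pull} plus Lemma~\ref{l:almost properness}), uses only surjectivity of $\whf$ on $U_0,U_1$, and counts preimages by the itinerary sets $\hK(a_0\cdots a_k)$. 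Your argument in fact survives the same modification: replace the inverse branch $h$ by the pull-back of a small neighborhood of $x_0$ containing $z$, note that each word still produces at least one preimage and that disjointness of $g(U)$ and the pull-back still gives freeness by your induction, and observe that the $O(1)$ bound per $h$-transition needs only boundedness of $S_M(\varphi)$, not a diffeomorphism. But as written the diffeomorphism claim is unsupported and should be removed.
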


\begin{proof}
The proof is divided into~$2$ parts.
In part~$1$ we construct an induced map and in part~$2$ we show an inequality analogous to~\eqref{e:periodic gap pressure} for the induced map, from which~\eqref{e:periodic gap pressure} follows as a direct consequence.

\partn{1}
Since~$x_0$ is hyperbolic repelling, there is $\rho>0$ such that there is a local inverse~$\phi$ of~$f^{2N}$ defined on~$B(x_0,\rho)$ and fixing~$x_0$.
Reducing~$\rho$ if necessary, assume that the closure of~$\phi(B(x_0, \rho))$ is contained in~$B(x_0, \rho)$ and that there is~$\theta$ in~$(0,1)$ such that~$\phi$ contracts distances at least by a factor of~$\theta$.
Note that~$f^{2N}\circ \phi$ is the identity  map on $B(x_0,\rho),$ hence~$\phi$ is increasing on~$B(x_0,\rho)$ and~$f^{2N}$ is increasing on~$\phi(B(x_0,\rho))$.
Since~$f$ is topologically exact, by Lemma~\ref{l:one-sided covering} there is an integer $k'\geq 1$ and a point~$z'$ in $(x_0, x_0+\rho/2)$ such that~$f^{2Nk'}(z') = x_0$ and such that for every $\eps>0$ the set $f^{2Nk'}(B(z',\eps))$ intersects $(x_0,x_0+\rho/2)$. Fix $\eps$ in $(0, |z'-x_0|)$ such that $f^{2Nk'}(B(z',\eps))\subset (x_0,x_0+\rho/2)$. Note that the closure of $B(z',\eps)$ is contained in $(x_0,x_0+\rho/2).$

Let $W$ be the pull-back of $f^{2Nk'}(B(z',\eps))\cap [x_0, x_0+\rho/2)$ by $f^{2Nk'}$ containing $z'$. Since $f^{2Nk'}$, and hence $\varphi^{k'},$ is continuous,   reducing~$\varepsilon$ if necessary, assume that
$ U_0' \= \phi^{k'} \left(f^{2Nk'} (W) \right) $
is disjoint from~$\overline{W}$.
By our choice of~$\phi$, it follows that there is $k_1\geq 0$ such that
$U_1 \= \phi^{k_1} \left( W \right) \subset f^{2Nk'}(W).$
Put
$$ k_0 \= k_1+k'
\text{ and }
U_0\=\phi^{k_1}(U_0'). $$
Then we have
$ k_0\geq 1,
U_0 \cap U_1 = \emptyset,
\text{ and }
U_1 \subset f^{2Nk'} \left( W\right). $
Since~$\phi$ is increasing and contracting and since~$f^{2Nk'} \left(W\right)$ contains~$x_0$, the set
$$ U_0 = \phi^{k_1}(U_0') = \phi^{k_0} \left( f^{2Nk'} \left( \overline{B(z', \varepsilon)} \right) \right) $$
is contained in~$f^{2Nk'} \left( W \right)$. Finally, note that
$ f^{2Nk_0}(U_1)
=
f^{2Nk'} \left( W \right)
=
f^{2Nk_0}(U_0). $
Put
$$ U \=U_0\cup U_1
\text{ and }
\whf \= f^{2Nk_0}|_U. $$

\partn{2}
Put $\hvarphi\= \frac{1}{2Nk_0} S_{2Nk_0}(\varphi)$, for every integer $m\geq 1$ put
$$ \hS_m(\hvarphi)
\=
\hvarphi +\hvarphi\circ \whf+\cdots +\hvarphi\circ \whf^{m-1}, $$
and note that to prove the lemma it suffices to show
\begin{equation}
  \label{e:induced gap pressure}
\limsup_{m\rightarrow \infty}\frac{1}{m}\log \sum_{y\in \whf^{-m}(x_0)} \exp (\hS_m(\hvarphi)(y))
>
\hvarphi(x_0).
\end{equation}
This is equivalent to show that that the radius of convergence of the series
$$ \Xi(s)
\=
\sum_{m = 0}^{\infty} \left( \sum_{z \in \whf^{-m}(x_0)} \exp \left( \hS_m(\hvarphi)(z) \right) \right) s^m $$
is strictly less than~$\exp(- \hvarphi(x_0))$.
To prove this fact, put $\hK\= \bigcap_{i= 0}^\infty \whf^{-i}(U)$ and observe that~$x_0$ is contained in this set.
Consider the itinerary map
$$ \iota: \hK \to \{0,1\}^{\{1, 2, \ldots \}} $$
defined so that for every~$i$ in~$\{1, 2, \ldots \}$ the point~$\whf^i(z)$ is in~$U_{\iota(z)_i}$.
Since~$\whf$ maps each of the sets~$U_0$ and~$U_1$ onto $f^{2Nk'} \left( W \right)$ and both of~$U_0$ and~$U_1$ are contained in this set, for every integer~$k\geq 0$ and every sequence $a_0,a_1, \ldots, a_k$ of elements of~$\{0,1\}$ there is a point of~$\whf^{-(k+1)}(x_0)$ in the set
$$ \hK(a_0 a_1 \cdots a_k)
\=
\left\{z\in \hK: \text{ for every $i$ in $\{0,1,\cdots, k\}$ we have } \iota(z)_i=a_i\right\}. $$
By our choice of~$\phi$ and~$U_0$, there is a constant~$\hC>0$ such that for every integer $k\geq 1$ and every point~$z$ in $\hK(\underbrace{0\cdots0}_k)$, we have
\begin{equation}\label{eq:b1}
\hS_k(\hvarphi)(z) \ge k\hvarphi(x_0) - \hC.
\end{equation}
Taking~$\hC$ larger if necessary, assume that for every point~$z$ in~$U$ we have
 \begin{equation}\label{e:2}
\hvarphi(z)
\ge
\hvarphi(x_0) - \hC.
\end{equation}
It follows that for every integer~$k\geq 0$ and every sequence $a_0, a_1, \cdots, a_k$ of elements of $\{0,1\}$ with $a_0=1$ and every point $x$ in $\hK(a_0a_1\cdots a_k),$ we have
\begin{equation}
\label{eq:b2}
\hS_{k+1}(\hvarphi)(x)\geq (k+1)\hvarphi(x_0)-2(a_0+a_1+\cdots+a_k)\hC.
\end{equation}
In fact, put $\ell\= a_0+\cdots +a_k$, $i_{\ell+1}\=k+1$ and let
$ 0 = i_1 < i_2 < \cdots < i_{\ell} \le k $
be all integers~$i$ in $\{0,1,\cdots,k\}$ such that $a_i=1$.
Then by~(\ref{eq:b1}) and ~(\ref{e:2}) for every $j$ in $\{ 1,\cdots, \ell\}$ we have
$$ \hS_{i_{j+1}-i_j}(\hvarphi)(\whf^{i_j}(x))
\geq
(i_{j+1}-i_j)\hvarphi(x_0)-2\hC. $$
Summing over $j$ in $\{1,2,\cdots, \ell\}$ we obtain~(\ref{eq:b2}).
Thus, if we put
$$ \Phi(s) \= \sum_{k = 1}^{\infty} \exp (k \hvarphi(x_0) - 2 \hC) s^k, $$
then each of the coefficients of
$$ \Upsilon(s) \= \Phi(s) + \Phi(s)^2 + \cdots  $$
is less than or equal to the corresponding coefficient of~$\Xi$, and therefore the radius of convergence of~$\Xi$ is less than or equal to that of~$\Upsilon$.
Since clearly~$\Phi(s) \to \infty$ as~$s \to \exp(- \hvarphi(x_0))^-$, there is~$s_0$ in~$\left( 0, \exp( - \hvarphi(x_0)) \right)$ such that~$\Phi(s_0) \ge 1$.
It follows that the radius of convergence of~$\Upsilon$, and hence that of~$\Xi$, is less than or equal to~$s_0$ and therefore it is strictly less than~$\exp(- \hvarphi(x_0))$.
This completes the proof of~\eqref{e:induced gap pressure} and of the lemma.
\end{proof}

\begin{proof}[Proof of the Key Lemma]
When~$\nu$ is supported on a periodic orbit, the desired inequality follows from Lemma~\ref{l:periodic case}.

Now suppose~$\nu$ is not supported on a periodic orbit. By Proposition~\ref{p:constructing a free IMFS} with $t=0$, there is~$D > 0$, a connected and compact subset~$B_0$ of~$I$, and a free IMFS~$(\phi_k)_{k = 1}^{\infty}$ generated by~$f$ with time sequence~$(m_k)_{k= 1}^\infty$ that is defined on~$B_0$, and such that for every~$k\ge 1$ and every point~$y$ in~$\phi_k(B_0)$ we have
\begin{equation}
\label{e:optimal sum}
S_{m_k}(\varphi)(y)
\ge
m_k\int \varphi \ d\nu -D.
\end{equation}
Since the IMFS~$(\phi_k)_{k = 1}^{\infty}$ is free, there is a point~$x_0$ of~$B_0$ such that for every~$\ul$ and~$\ul'$ in~$\Sigma^*$ such that~$m_{\ul} = m_{\ul'}$, the sets~$\phi_{\ul}(x_0)$ and~$\phi_{\ul'}(x_0)$ are disjoint.
Note that for every integer~$k \ge 1$, every~$\ul = l_1 \cdots l_k$ in~$\Sigma^*$, every~$y_0$ in~$\phi_{\ul}(x_0)$, and every~$j$ in~$\{1, \cdots, k-1 \}$, the point
$$ y_j \= f^{m_{l_1}+m_{l_2}+\cdots +m_{l_{j}}}(y_0) $$
is in~$\phi_{m_{l_{j+1}}}(B_0)$.
Therefore, by~\eqref{e:optimal sum} we have
\begin{multline*}
S_{m_{\ul}}(\varphi)(y_0)
=
S_{m_{l_1}}(\varphi)(y_0) + S_{m_{l_2}}(\varphi)(y_1) + \cdots + S_{m_{l_k}}(\varphi)(y_{k - 1})
\\ \ge
\sum_{i=1}^k \left( m_{l_i}\int \varphi \ d\nu - D \right)
=
m_{\ul}\int \varphi \ d\nu - kD.
\end{multline*}
This shows that for every~$\ul$ in~$\Sigma^*$, and every~$y_0$ in~$\phi_{\ul}(x_0)$ we have
\begin{equation}\label{e:bigsum}
\exp (S_{m_{\ul}}(\varphi)(y_0))\ge \exp
\left(m_{\ul}\int \varphi \ d\nu\right)
\exp(-|\ul|D).
\end{equation}
On the other hand, if for every integer~$n \ge 1$  we put
$$ \Xi_n
\=
\bigcup_{\ul \in \Sigma^*, m_{\ul} = n} \phi_{\ul}(x_0), $$
then the radius of convergence of the series
$$ \Xi(s)
\=
\sum_{n= 1}^\infty\left(\sum_{y\in \Xi_n}\exp (S_n(\varphi)(y))\right)s^n, $$
is given by
$$ R \= \left(\limsup_{n\to \infty} \left(\sum_{y\in \Xi_n}\exp (S_n(\varphi)(y))\right)^{1/n}\right)^{-1}, $$
and satisfies
$$\exp \left(-\limsup_{n\rightarrow \infty}\frac{1}{n}\log \sum_{y\in f^{-n}(x_0)}\exp (S_n(\varphi)(y))\right)\le R. $$
Hence, to complete the proof of Key Lemma it suffices to prove~$R < \exp \left( - \int \varphi \ d\nu \right)$.
Put
$$ \Phi(s)
\=
\sum_{l=1}^\infty \exp(-D)\exp \left(m_{l}\int \varphi \ d\nu\right) s^{m_l}. $$
By inequality~\eqref{e:bigsum} and the fact that $(\phi_k)_{k = 1}^{\infty}$ is free, each of the coefficients of the series
\begin{equation*}
\Upsilon(s)
\=
\sum_{i=1}^\infty\Phi(s)^i
=
\sum_{n = 1}^{\infty} \left(\sum_{\ul \in \Sigma^*, m_{\ul} = n}\exp \left(m_{\ul}\int \varphi \ d\nu\right) \exp(-|\ul| D)\right)s^n,
\end{equation*}
does not exceed the corresponding coefficient of the series~$\Xi$, so the radius of convergence of~$\Xi$ is less than or equal to that of~$\Upsilon$.
Since clearly~$\Phi(s) \to \infty$ as~$s \to \exp\left(-\int \varphi \ d\nu\right)^-$, there exists~$s_0$ in~$\left( 0, \exp\left(-\int \varphi \ d\nu\right) \right)$ such that~$\Phi(s_0)\ge 1$.
This implies that the radius of convergence of~$\Upsilon$, and hence that of~$\Xi$, is less than or equal to~$s_0$, and therefore that~$R \le s_0 < \exp \left(-\int \varphi \ d\nu \right)$. This completes the proof of the Key Lemma.

\end{proof}

\end{document}